\newtheorem{theorem}{Theorem}
\newtheorem*{theorem*}{Theorem}
\theoremstyle{plain}
\newtheorem{corollary}[theorem]{Corollary}
\newtheorem{lemma}[theorem]{Lemma}
\theoremstyle{definition} 
\newtheorem{definition}[theorem]{Definition}
\newtheorem{remark}[theorem]{Remark}
\newcommand{\R}{\mathbb{R}}
\newcommand{\E}{\mathbb{E}}
\newcommand{\mute}[1]{}
\let\todon\todo
\renewcommand{\todo}[1]{\todon{\color{red}#1}}
\newcommand{\ds}{d} 
\newcommand{\TC}{T((\R^\ds))} 
\newcommand\proj{\operatorname{proj}}
\begin{document}

\title{The expected signature of Brownian motion stopped on the boundary of a circle has finite radius of convergence}

\author{Horatio Boedihardjo\thanks{University of Reading. HB gratefully acknowledges EPSRC's support (EP/R008205/1).}, Joscha Diehl\thanks{Universit\"at Greifswald},
Marc Mezzarobba\thanks{Sorbonne Universit\'e, CNRS, Laboratoire d'informatique de Paris 6, LIP6, F-75005 Paris, France.
MM~was supported in part by ANR grant ANR-14-CE25-0018-01 (FastRelax).},
Hao Ni \thanks{University College London. HN acknowledges the support
by the EPSRC under the program grant EP/S026347/1 and by the Alan Turing Institute under the EPSRC grant EP/N510129/1. } }
\maketitle

\begin{abstract}
  The expected signature is an analogue of the Laplace transform for probability measures on rough
  paths. A key question in the area has been to identify a general condition to ensure that the expected signature uniquely determines the measures. A sufficient condition has recently been given by Chevyrev and Lyons, and requires a strong upper bound on the expected signature. While the upper bound was verified for many well-known processes up to a deterministic time, it was not known whether the required bound holds for random time. In fact even the simplest case of Brownian motion up to the exit time of a planar disc was open.
  For this particular case we answer this question using a suitable hyperbolic projection of the expected signature.
  The projection satisfies a three-dimensional system of linear PDEs, which (surprisingly) can be solved explicitly,
  and which allows us to show that the upper bound on the expected signature is \emph{not} satisfied.
\end{abstract}

\tableofcontents

\section{Introduction}

Let a probability measure on a subset of the real line have moments of all orders.
Under which conditions do these moments pin down the probability measure uniquely?
This is the well-studied \emph{moment problem}. When the subset is compact,
the answer is always affirmative. In the noncompact case uniqueness is more delicate (see \cite{Sch2017}).

In stochastic analysis one is usually concerned with measures on
some space of paths, the prime example being \emph{Wiener measure} on the space
of continuous functions.
It turns out that for many purposes a good replacement for ``monomials''
in this setting are the iterated integrals of paths.
The collection of all of these integrals is called the \emph{iterated-integrals signature}.

For smooth paths $X: [0,T] \to \R^d$ (and with respect to a time horizon $T>0$), it is defined, using Riemann-Stieltjes integration, as
\begin{align}
  \begin{split}
  \label{eq:signature}
  S(X)_{0,T}
  &:=
  1 + \int_0^T dX_s + \int_0^T \int_0^{r_2} dX_{r_1} \otimes dX_{r_2}\\
  &\qquad
  + \int_0^T \int_0^{r_3} \int_0^{r_2} dX_{r_1} \otimes dX_{r_2} \otimes dX_{r_3} + \dots
  \in \TC := \prod_{n=0}^\infty (\R^d)^{\otimes n}.
  \end{split}
\end{align}
It is well-known (see \cite{BGLY2016} and references therein) that
\begin{itemize}
  \item $S(X)_{0,T} \in G$, where $G \subset \TC$ is the group of \emph{grouplike elements},
  \item $S(X)_{0,T}$ completely characterizes the path $X$ up to reparametrization and up to tree-likeness.
\end{itemize}

Let $X: \Omega \times [0,T] \to \R^d$ now be a stochastic process.
For fixed $\omega \in \Omega$, $t \mapsto X_t(\omega)$ is usually \emph{not} smooth,
so that we have to assume that the stochastic process posesses a ``reasonable'' integration theory.
In particular assume that integrals of the form $\int g(X_s) dX_s$,
exist for a large class of functions $g \in C(\R^d, L(\R^d, \R^n))$ and that the fundamental theorem of calculus holds,
\begin{align*}
  f(X_t) = f(X_0) + \sum_{i=1}^d \int_0^t \partial_{x_i} f(X_s) dX_s^i.
\end{align*}
An important example is Brownian motion with Stratonovich integration.
Other examples include: the Young integration against a fractional Brownian motion
with Hurst parameter strictly larger than $1/2$, \dots
(see \cite{FV10}).

The iterated-integrals signature $S(X)_{0,T}$ defined by the expression
\eqref{eq:signature} --- now, using the given integration theory --- is then a random variable. Let us assume that we can
take its expectation level-by-level, i.e.\ for all $n \ge 1$ (we postpone the
discussion of the choice of norm on $(\R^d)^{\otimes n}$ to later)
\begin{align*}
  \E\left[ \left\|\int_0^T \int_0^{r_n} \dots \int_0^{r_2} dX_{r_1} \otimes \dots \otimes dX_{r_n}\right\| \right] < +\infty.
\end{align*}
We can then define expected signature level-by-level
\newcommand\expSig{\mathsf{ExpSig}}
\begin{align}
  \expSig(X)_T
  &:=
  \E_{\mathsf{p}}[ S(X)_{0,T} ] \notag \\
  &:=
  \sum_{n=0}^\infty \E\left[ \int_0^T \int_0^{r_n} \dots \int_0^{r_2} dX_{r_1} \otimes \dots \otimes dX_{r_n} \right]
  \in
  \TC.
  \label{eq:expSig}
\end{align}
where $\E_{\mathsf{p}}$ denotes the expectation level-by-level ($\mathsf{p}$ for ``projective'') of a $G$-valued random variable.
The question arises:
\begin{center}
  Does $\expSig(X)$ completely characterize the law of $X$?
\end{center}
As we have seen above, the computation of $S(X)_{0,T}$ already incurs
a loss of information: the parametrization of $X$ and any tree-like parts are lost.
The relevant question is hence
\begin{center}
  Does $\expSig(X)_T$ completely characterize the law of $X$, up to parametrization and tree-likeness?
\end{center}
Since this formulation is a bit awkard, and since the (deterministic) step $S(X) \mapsto X$ is completely
understood, we can instead focus on
\begin{center}
  Does $\expSig(X)_T$ completely characterize the law of $S(X)_{0,T}$ on $G$?
\end{center}

A sufficient condition for this to be the case is given in \cite{ChevyrevLyons}:
if $\expSig(X)_T$ has infinite radius of convergence, that is
\begin{align}\label{eq:InfiniteROC}
  \sum_{n \ge 0} \left\|\proj_n \expSig(X)_T\right\| \lambda^n < +\infty,
\end{align}
for all $\lambda > 0$ then the law of $S(X)_{0,T}$ on $G$ is the unique
law with this (projective) expected value.
Here $\proj_n: \TC \to (\R^d)^{\otimes n}$ denotes projection onto tensors of length $n$.

Let us give two examples.
Let $\mu$ be a probability measure on $\R$ having all moments and define
\begin{align*}
  a_n := \int x^n \mu(dx).
\end{align*}
Consider the stochastic process $X_t := t Z$, where $Z$ is distributed according to $\mu$.
Since $X$~is smooth, its signature is well-defined and actually has the simple form
\begin{align*}
  S(X)_{0,T} = 1 + T Z + \frac{T^2}{2!} Z^2 + \frac{T^3}{3!} Z^3 + \dots
\end{align*}
Then
\begin{align*}
  \expSig(X)_T = 1 + T a_1 + \frac{T^2}{2!} a_2 + \frac{T^3}{3!} a_3 + \dots,
\end{align*}
and a sufficient condition for
$\sum_n a_n T^n \lambda^n/n!$
to have infinite radius of convergence is $|a_n| \le C^n$, for some $C > 0$.%
\footnote{The condition $|a_n| \le C^n$ is of course more than enough in the classical moment problem
to have uniqueness for the law $\mu$ on $\R$ (\cite[Example X.6.4]{SR75}).}
Then \cite[Proposition 6.1]{ChevyrevLyons} applies, and the law
of $S(X)_{0,T}$ on $G$ is uniquely determined by these moments.

Consider now the expected signature of a standard Brownian motion $B$ calculated up to some \emph{fixed} time $T > 0$.
It is known (see for example \cite[Proposition 4.10]{LV2004}) that
\begin{align*}
  \expSig( B )_T
  =
  \exp\left( \frac{T}{2} \sum_{i=1}^d e_i \otimes e_i \right).
\end{align*}
It follows that
\begin{equation*}
  \left\| \proj_{2n} \expSig( B ) \right\|
  =
  \left\| \frac{T^n}{2^n n!} \left( \sum_{i=1}^d e_i \otimes e_i \right)^n \right\|
  \le
  \frac{d^n T^n}{2^n n!},
\end{equation*}
and hence
\begin{align*}
  \sum \|\proj_n \expSig(B)_T \| \lambda^n < +\infty,
\end{align*}
for any $\lambda > 0$.
Again, by \cite[Proposition 6.1]{ChevyrevLyons}, the law of $S(B)_{0,T}$ is uniquely determined by $\expSig(B)_T$.

The decay rate of expected signature of the stochastic process up to the exit time from a bounded domain is a very challenging problem, even for the simple Brownian motion case. The literature on the decay rate of the expected signature focuses on the case for the fixed time interval, e.g. \cite{Passeggeri2016} and \cite{FrizRiedel2014}, which heavily relies on the Gaussian tail of the increment. However, the increment of stopped processes would violate this assumption. The question on the finiteness of the convergence radius of the expected signature of the Brownian motion was firstly proposed in \cite{LyonsNi} in 2015. It has remained open until our paper provides the first negative example, i.e. a 2-dimensional Brownian motion up to the unit disk, which implies the lower bound of the decay rate of the expected signature in this case. In \cite[Theorem 3.6]{LyonsNi}, using Sobolev estimates, under certain smoothness and boundedness condition of the domain, geometric upper bounds for the decay rate of the expected signature of stopped Brownian is established (see also \cite[Example 6.20]{ChevyrevLyons} for a probabilistic approach). It is even more challenging to establish a non-trivial lower bound of the decay rate of the expected signature in this case. Our work may shed some light on how to use the PDE approach to derive the lower bound of the decay rate of the stopped diffusion processes. 

Concretely, we consider the Brownian motion $B^z$ in $\R^2$ started at some point $z$ in the unit circle
$\mathbb D := \{ z \in \R^2 : |z| \le 1 \}$,
and stopped at hitting the boundary, that is
\begin{align}
  \label{eq:tau}
  \tau := \inf \left\{ t \ge 0 : |B^z_t| \in \partial \mathbb D \right\}.
\end{align}
In the notation introduced above, we are interested in
\begin{align*}
  \Phi(z) := \expSig( X^z )_\infty,
\end{align*}
where $X^z_t := B^z_{t \wedge \tau}$.
In \cite{LyonsNi} it was shown that
for every $n\in\mathbb{N}$
and $n\geq2$, the $n$th term of $\Phi$ satisfies the following PDE:
\begin{equation}
  \Delta\left(\proj_{n}\left(\Phi(z)\right)\right)
=  -2\sum_{i=1}^{d}e_{i}\otimes\frac{\partial\proj_{n-1}\left(\Phi(z)\right)}{\partial z_{i}}-\left(\sum_{i=1}^{d}e_{i}\otimes e_{i}\right)\otimes\proj_{n-2}\left(\Phi(z)\right),\label{eq:LyonsNiPDE}
\end{equation}
with the boundary condition that for each $|z|=1$, 
\[
\proj_{n}\left(\Phi(z)\right)=\begin{cases}
0, & \text{if}\,n\geq1\\
1, & \text{if }n=0.
\end{cases}
\]
Additionally, one has $\proj_{0}\left(\Phi(z)\right)=1$ and $\proj_{1}\left(\Phi(z)\right)=0$ for all $z \in \mathbb D$.
Using this, they were able to obtain the bound
$\left\|\proj_{n}\left(\Phi(x)\right)\right\|\leq C^{n}$
for some $C > 0$
(\cite[Theorem 3.6]{LyonsNi}).
This bound is \emph{not} enough to decide whether the radius of convergence for $\expSig(X^z)_\infty$ is infinite or not,
but it is enough to deduce that $\expSig(X^z)_\infty$ has radius of converge strictly larger than $0$.
In this work we show that the radius of convergence is indeed finite.


Recall from \cite[Proposition 6.1]{ChevyrevLyons} that
if $A,B$ are $G$-valued random variables such that
$\E_{\mathsf p}[ A ] = \E_{\mathsf p}[ B ]$ and 
$\E_{\mathsf p}[ A ]$ has an infinite radius of convergence, then
$A\overset{\mathcal{D}}{=}B$.
Our main theorem, proven in Section \ref{sec:concluding}, is the following.

\begin{theorem*}
  The expected signature $\Phi(0) = \expSig(X^0)_\infty = \E_{\mathsf p}[ S(X^0)_{0,\infty} ]$ of a two-dimensional Brownian motion stopped upon exiting the unit disk has a \emph{finite} radius of convergence.
\end{theorem*}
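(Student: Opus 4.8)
The plan is to exploit the hyperbolic projection mentioned in the abstract: instead of trying to control all tensor norms $\|\proj_n \Phi(0)\|$ directly, apply a suitable linear functional (or a family thereof) to the PDE system \eqref{eq:LyonsNiPDE} so that the infinite tensor hierarchy collapses to a finite-dimensional object. Concretely, fix a parameter and consider the development of the signature into $\SU(1,1)$ (or $\operatorname{SL}_2(\R)$), i.e. the solution $M(z)$ of the linear ODE driven by $X^z$ with some constant matrices $\mathsf{A}_1,\mathsf{A}_2$ spanning a hyperbolic $\mathfrak{sl}_2$-triple; then $\E[M]$ is obtained from $\Phi(z)$ by applying the corresponding algebra homomorphism $T((\R^2)) \to 2\times 2$ matrices. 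Because this homomorphism sends the generators $e_i$ to $\mathsf{A}_i$, the infinite system \eqref{eq:LyonsNiPDE} contracts to a single \emph{finite} system: writing $F(z) := \E_{\mathsf p}[M(z)]$ (a $2\times2$, hence at most $4$-dimensional unknown, which by symmetry of the disk reduces further), one gets a closed second-order linear elliptic PDE
\[
  \Delta F = -2\sum_{i=1}^2 \mathsf{A}_i \partial_{z_i} F - \Bigl(\sum_{i=1}^2 \mathsf{A}_i^2\Bigr) F,
\]
with boundary data $F\equiv \Id$ on $\partial\mathbb D$. Rotational symmetry of Brownian motion in the disk should let me reduce this to an ODE in the radial variable $r=|z|$ — a three-dimensional linear ODE system, as the abstract promises — which I expect can be integrated in closed form (e.g. in terms of elementary or Bessel-type functions) because the hyperbolic structure makes the characteristic equation factor nicely.

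The key step is then the following dichotomy. If $\expSig(X^0)_\infty = \Phi(0)$ had infinite radius of convergence, then applying the matrix homomorphism term-by-term would converge for \emph{every} scaling of $\mathsf{A}_1,\mathsf{A}_2$; in particular the scalar entries of $F(0)$, viewed as power series in the scaling parameter, would be entire functions of that parameter. So the strategy is: (i) solve the radial ODE to get $F(0)$ explicitly as a function of the scaling parameter $\beta$; (ii) exhibit a \emph{singularity} of $\beta \mapsto F(0)(\beta)$ at some finite $\beta_0 \in (0,\infty)$ — a genuine pole or branch point, not removable. Since the development map is continuous and the series for $\Phi(0)$ would dominate the series for $F(0)(\beta)$ for small enough $\beta$ if the radius of convergence were infinite, such a finite singularity of $F(0)(\cdot)$ forces $\sum_n \|\proj_n\Phi(0)\|\lambda^n = \infty$ for $\lambda$ large, i.e. finite radius of convergence. (One must be a little careful: the norm on $(\R^2)^{\otimes n}$ and the operator norm of $\mathsf{A}_i^{\otimes n}$ must be compatible; this is the reason for the ``postpone the discussion of the norm'' remark, and a projective/admissible cross-norm makes $\|\text{development of level }n\| \le C^n \|\proj_n\Phi\|$ hold, which is all that is needed for the contrapositive.)

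The main obstacle is step (ii): showing that the explicit $F(0)(\beta)$ really does blow up at a finite $\beta_0$, rather than, say, oscillating but staying bounded, or having only a removable singularity. This is where the \emph{hyperbolic} (as opposed to elliptic, i.e. $\SU(2)$) choice of development is essential — for a compact group the solution would be quasi-periodic and bounded, giving no information, whereas the noncompact $\SU(1,1)$ development should produce exponential growth / a genuine resonance at the radius $r=1$ for a critical value of $\beta$. Technically I expect this amounts to locating a zero of some explicit entire function (a Wronskian-type denominator coming from solving the boundary-value problem) at a real positive argument and checking it is not cancelled by the numerator; tracking constants through the closed-form solution of the radial ODE, and verifying the compatibility of norms so that the contrapositive is rigorous, will be the bulk of the remaining work. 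A secondary subtlety is justifying that $\E_{\mathsf p}[M(z)]$ is well-defined and solves the claimed PDE — i.e. that one may interchange expectation, the (Stratonovich) development ODE, and the harmonic/Dynkin operator — which should follow from the finiteness estimates $\|\proj_n\Phi\|\le C^n$ already available from \cite[Theorem 3.6]{LyonsNi} together with the compatible norm bound.
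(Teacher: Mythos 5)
Your plan is essentially the paper's proof: develop $\Phi$ through a hyperbolic (noncompact) matrix representation scaled by a parameter $\lambda$, so that the tensor hierarchy collapses to a finite linear elliptic PDE, reduce by rotational invariance to a three-dimensional radial ODE solved by Bessel functions, locate a zero of the Wronskian-type denominator at a finite real $\lambda$ not cancelled by the numerator, and conclude by the identity theorem that $\Phi(0)$ cannot have infinite radius of convergence. The only cosmetic difference is that the paper uses the $3\times 3$ symmetric Hambly--Lyons development rather than a $2\times 2$ $\mathfrak{sl}_2$ one, and it carries out explicitly the step you correctly flag as the remaining work (rigorous enclosures showing $d(\lambda)$ vanishes in $(2.5,3)$ while the numerator stays away from zero).
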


The condition of $\E_{\mathsf p}[ A ]$ having an infinite radius of convergence
is equivalent to $\E_{\mathsf p}[ A ]$ lying in $E$.
Here $E$ is defined as the closure of $T((\mathbb{R}^2))$ under the
coarsest topology such that for all normed algebras $\mathcal A$ and all $M \in L(\mathbb{R}^2, \mathcal A)$,
the extension
$M: T((\mathbb{R}^2)) \to \mathcal A$
is continuous.
Recall that for $M\in L\left(\mathbb{R}^2,A\right)$, we may define $M$ firstly
as a map on the $k$-times algebraic tensor product $(\mathbb{R}^2)^{\otimes_{a}k}$,
by the relation
\begin{equation}
  M\left(v_{1}\otimes\cdots\otimes v_{k}\right)=M\left(v_{1}\right)\cdots M\left(v_{k}\right),\label{eq:LinearMapExtension}
\end{equation}
and then extended it to $T((\mathbb{R}^2))$ by linearity.

We want to show that $\Phi(z)$ does \emph{not} lie in the space $E$.
It is sufficient to show that there exists
$\lambda^{*} \in \mathbb{R}$, and $M\in L\left(\mathbb{R}^{2},M_{3\times3}\left(\mathbb{R}\right)\right)$,
such that $\left(\lambda M\right)\left(\Phi(z)\right)$
diverges as $\lambda$ tends to a finite number $\lambda^{*}$. In
fact, we will choose $M$ to be 
\begin{equation}
  M:\begin{pmatrix}
  x\\
  y
  \end{pmatrix}\mapsto\begin{pmatrix}
  0 & 0 & x\\
  0 & 0 & y\\
  x & y & 0
  \end{pmatrix}.\label{eq:HyperbolicDevelopment}
\end{equation}
Such a map $M$ first appeared in \cite{HamblyLyons} to study the signature of bounded variation paths and is also subsequently used in \cite{LyonsXu}.

We proceed as follows.
In Section~\ref{sec:differentiability},
for $\lambda > 0$ we let $\lambda M$ act on $\Phi(z)$.
For $\lambda$ small enough, we show that resulting linear map in $L(\R^3,\R^3)$,
evaluated at $(0,0,1) \in \R^3$ is smooth in $z$ and solves a certain PDE.
Using rotational invariance of Brownian motion, in Section~\ref{sec:polarDecomposition}
we rewrite said PDE solution in polar coordinates.
In Sections \ref{sec:ODEABC}~and~\ref{sec:solvingABC},
we obtain an explicit solution for the PDE (still, for $\lambda$ small enough) in terms of Bessel functions.
Finally, in Section~\ref{sec:concluding} we show that the solution blows up
as $\lambda \to \tilde \lambda$ for some $\tilde \lambda < +\infty$, proving our main theorem.
The Appendix, Section \ref{sec:appendix}, contains some auxiliary results on PDEs.

\section{Differentiability of the development of expected signature}
\label{sec:differentiability}

We first need two technical lemmas which assert that the development
of the expected signature is twice differentiable, and satisfies the PDE
we expect it to. In the lemma below we will adopt the multi-index notation
\[
\left|\left(\alpha_{1},\alpha_{2}\right)\right|=\left|\alpha_{1}\right|+\left|\alpha_{2}\right|,
\qquad
D^{\left(\alpha_{1},\alpha_{2}\right)}u(z)=\frac{\partial^{\alpha_{1}+\alpha_{2}}u}{\partial z_{1}^{\alpha_{1}}\partial z_{2}^{\alpha_{2}}}(z),
\qquad
\left(\alpha_{1},\alpha_{2}\right)\in\mathbb{N}^{2}.
\]

Observe that
\[
\left(\lambda M\right)\Phi(z)=\sum_{n=0}^{\infty}\lambda^{n}M\proj_{n}\left(\Phi(z)\right).
\]
Let $\Vert \cdot \Vert$ be the projective norm.

\begin{lemma}
\label{lem:DifferentiabilityOfExpectedSignature} The function 
$z\mapsto\proj_{n}\left(\Phi(z)\right)$
is twice continuously differentiable.
There exists a constant
$C>0$ such that for all $n\in\mathbb{N}$, all $z\in\mathbb{D}$ and all $\alpha \in \mathbb{N}^2$ satisfying
$|\alpha|\leq2$, one has the bound
\[
\left\Vert D^{\alpha}\proj_{n}\left(\Phi(z)\right)\right\Vert \leq C^{n}.
\]
Moreover, there exists $\lambda^{*}>0$ such that for all $\lambda<\lambda^{*}$
\[
z\mapsto\sum_{n=0}^{\infty}\lambda^{n}M\proj_{n}\left(\Phi(z)\right)
\]
is twice differentiable in $z$ and if $|\alpha|\leq2$,
then 
\begin{equation*}
  D^{\alpha}\sum_{n=0}^{\infty}\lambda^{n}M\proj_{n}\left(\Phi(z)\right)
=  \sum_{n=0}^{\infty}\lambda^{n}D^{\alpha}M\proj_{n}\left(\Phi(z)\right).
\end{equation*}
\end{lemma}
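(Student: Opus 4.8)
The plan is to establish the three claims of the lemma in sequence, reusing the PDE \eqref{eq:LyonsNiPDE} and its boundary conditions as the structural backbone. First I would prove the $C^2$ regularity and the bound $\Vert D^\alpha \proj_n(\Phi(z))\Vert \le C^n$ simultaneously, by induction on $n$. The base cases $n=0,1$ are trivial since $\proj_0(\Phi)\equiv 1$ and $\proj_1(\Phi)\equiv 0$. For the inductive step, I regard \eqref{eq:LyonsNiPDE} as a Poisson equation $\Delta u_n = f_n$ on the unit disk with zero boundary data, where the right-hand side $f_n$ is built linearly from $\proj_{n-1}(\Phi)$ (via first derivatives) and $\proj_{n-2}(\Phi)$. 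By the inductive hypothesis $f_n$ is $C^1$ with $\Vert D^\alpha f_n\Vert$ bounded by (a constant times) $C^{n-1}$ for $|\alpha|\le 1$, hence in particular Hölder continuous; elliptic Schauder estimates (collected, presumably, in the Appendix) then give $u_n \in C^{2,\gamma}(\overline{\mathbb D})$ together with an a priori bound $\Vert u_n\Vert_{C^{2,\gamma}} \le K(\Vert f_n\Vert_{C^{0,\gamma}} + \Vert u_n\Vert_{\sup})$ for a constant $K$ depending only on the domain. One also needs the sup bound $\Vert u_n\Vert_{\sup}\le C^n$, which already follows from \cite[Theorem 3.6]{LyonsNi} (this is precisely their $\Vert\proj_n(\Phi(x))\Vert\le C^n$), so combining it with the Schauder bound on $f_n$ closes the induction after enlarging $C$ if necessary. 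Care must be taken that the constant $C$ does not depend on $n$: since $f_n$ contributes a factor proportional to $C^{n-1}$ and the Schauder constant $K$ is $n$-independent, choosing $C$ large enough at the outset absorbs the multiplicative overhead at each level.

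Second, with the uniform bound $\Vert D^\alpha \proj_n(\Phi(z))\Vert \le C^n$ in hand for all $|\alpha|\le 2$, the termwise differentiability of $\sum_n \lambda^n M\proj_n(\Phi(z))$ is essentially a dominated-convergence / uniform-convergence argument. The operator norm of $M$ restricted to $(\R^2)^{\otimes n}$ grows at most geometrically: $M$ maps each unit tensor factor to a fixed $3\times 3$ matrix whose norm is controlled by a constant $c_M$, so $\Vert M\proj_n(w)\Vert \le c_M^{\,n}\Vert w\Vert_{\mathrm{proj}}$ by \eqref{eq:LinearMapExtension} and submultiplicativity of the matrix norm. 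Hence $\Vert \lambda^n D^\alpha M\proj_n(\Phi(z))\Vert \le (|\lambda| c_M C)^n$ for $|\alpha|\le 2$, and picking $\lambda^* := 1/(c_M C)$ makes this a convergent geometric series, uniformly in $z\in\mathbb D$, for every $\lambda<\lambda^*$. Uniform convergence of the series of $\alpha$-derivatives for all $|\alpha|\le 2$, together with convergence of the series itself at one point, is the standard criterion for differentiating termwise (applied successively to pass from the value to first and then second derivatives), which yields the displayed identity $D^\alpha \sum_n \lambda^n M\proj_n(\Phi(z)) = \sum_n \lambda^n D^\alpha M\proj_n(\Phi(z))$.

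The main obstacle I anticipate is the regularity bootstrap in the first part, specifically making the elliptic estimate genuinely uniform in $n$ while only having $C^1$ (not $C^{1,\gamma}$) control a priori on the lower-level terms. The clean way around this is to carry a Hölder norm through the induction rather than just a $C^k$ norm: assume inductively that $\Vert \proj_{n-1}(\Phi)\Vert_{C^{1,\gamma}(\overline{\mathbb D})} \le C^{n-1}$, which makes $f_n \in C^{0,\gamma}$ with the right bound, then Schauder upgrades $u_n$ to $C^{2,\gamma}$ with $\Vert u_n\Vert_{C^{2,\gamma}}\le C^n$, and in particular $\Vert u_n\Vert_{C^{1,\gamma}}\le C^n$, which feeds the next step. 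One subtlety is the boundary: since $\partial\mathbb D$ is smooth and the boundary data is constant ($0$ for $n\ge 1$), global (up-to-the-boundary) Schauder estimates apply without trouble, so $u_n\in C^{2,\gamma}(\overline{\mathbb D})$ rather than merely in the interior — this is important because we need the bound on all of $\mathbb D$, including near $|z|=1$. A second minor point is that \eqref{eq:LyonsNiPDE} is a system (tensor-valued), but since $\Delta$ acts componentwise and the coupling on the right-hand side is through lower levels only, the scalar theory applies entrywise with the projective norm bookkeeping done at the end; one just needs that the number of tensor components at level $n$, namely $2^n$, is already absorbed into the geometric constant $C^n$.
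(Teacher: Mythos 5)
Your proposal is correct in substance but follows a genuinely different route from the paper for the first half of the lemma. The paper does not run a H\"older--Schauder bootstrap: it first establishes smoothness of $z\mapsto\proj_n(\Phi(z))$ outright via the convolution representation $\Phi=G_\varepsilon*\Phi$ with a smooth compactly supported kernel (Theorem~\ref{Phi_theorem}), then obtains the geometric bound by inducting on $n$ in the $L^2$-based Sobolev scale, $\Vert\proj_n(\Phi)\Vert_{W^{m,2}(\mathbb D)}\le C^n$ (Lemma~\ref{lemma_pde}, resting on the $W^{k+2,2}$ elliptic estimate of Theorem~\ref{boundaryRegularity}, i.e.\ Theorem~8.13 of \cite{gilbarg2015elliptic}), and finally converts this into the pointwise $C^2$ bound by the Sobolev embedding of $W^{4,2}(\mathbb D)$ into $C^2$ in dimension two (Theorem~2.2 of \cite{LyonsNi}). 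Your Schauder induction carrying $\Vert\cdot\Vert_{C^{1,\gamma}}$ through the levels reaches the same geometric bound and is arguably more self-contained on the quantitative side (no embedding constant to track), but it must address one point that the paper's route makes disappear: a priori, $\proj_n(\Phi)$ is only known to satisfy \eqref{eq:LyonsNiPDE} in a weak sense, so before applying the $C^{2,\gamma}$ a priori estimate to it you need to identify it with the classical Schauder solution of the Dirichlet problem (existence in $C^{2,\gamma}(\overline{\mathbb D})$ plus uniqueness of weak solutions for $\Delta$ on the disk); this is routine for the Laplacian with smooth boundary and constant boundary data, but it should be said explicitly, since an a priori estimate alone does not confer regularity on a function not yet known to be classical. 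The second half of your argument --- the projective-norm estimate $\Vert M\proj_n(w)\Vert\le c_M^{\,n}\Vert w\Vert$ followed by termwise differentiation of the geometrically dominated series of derivatives for $\lambda<1/(c_M C)$ --- is exactly the paper's argument.
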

\begin{proof}
Let $m\in\mathbb{N}$. By Theorem \ref{Phi_theorem} in the appendix,
the function 
$z\mapsto\proj_{n}\left(\Phi(z)\right)$
is twice continuously differentiable (it is in fact infinitely differentiable
on $\mathbb{D}$). By Lemma~\ref{lemma_pde} in the Appendix, there exists $C>0$
such that for all $n\in\mathbb{N}$ 
\begin{equation}
\Vert\proj_{n}\left(\Phi(z)\right)\Vert_{W^{m,2}\left(\mathbb{D}\right)}\leq C^{n},\label{eq:GeometricBound}
\end{equation}
where the norm $\Vert\cdot\Vert_{W^{m,2}\left(\mathbb{D}\right)}$
is the Sobolev norm on the unit disc $\mathbb{D}$ with respect to
the variable $z$, 
\[
\Vert u\Vert_{W^{m,2}\left(\mathbb{D}\right)}=\max_{\left|\alpha\right|=m}\Vert D^{\alpha}u\Vert_{L^{2}\left(\mathbb{D}\right)}.
\]


By Theorem 2.2 in \cite{LyonsNi}, which bounds the values
of a function $u$ in terms of the Sobolev norm of $u$, there is
some constant $\tilde{C}(2)$ such that for all $z\in\mathbb{D}$
and $\left|\alpha\right|\leq2$, 
\begin{align*}
 \left|D^{\alpha}\proj_{n}\left(\Phi(z)\right)\right|
& \leq \tilde{C}(2)\Vert D^{\alpha}\proj_{n}\left(\Phi(z)\right)\Vert_{W^{2,2}\left(\mathbb{D}\right)}\\
& \leq \tilde{C}(2)\Vert\proj_{n}\left(\Phi(z)\right)\Vert_{W^{4,2}\left(\mathbb{D}\right)}\\
& \leq \tilde{C}(2)\,C(4)^{n}.
\end{align*}
Since $M\proj_{n}\left(\Phi(z)\right)$ is a linear image of $\proj_{n}\left(\Phi(z)\right)$,
the function $M\proj_{n}\left(\Phi(z)\right)$ is twice continuously
differentiable in $z$, and moreover, there exists $c>0$ such that
for all $z\in\mathbb{D}$, 
\[
\left|D^{\alpha}M\proj_{n}\left(\Phi(z)\right)\right|\leq c^{n}.
\]

This bound also allows us to deduce that for $\left|\alpha\right|=2$, the series
\[
\sum_{n=0}^{\infty}\lambda^{n}D^{\alpha}M\proj_{n}\left(\Phi(z)\right)
\]
converges uniformly and hence the series 
\[
\sum_{n=0}^{\infty}\lambda^{n} M\proj_{n}\left(\Phi(z)\right)
\]
is twice continuously differentiable and the derivatives can be taken inside the infinite summation.  
\end{proof}

\begin{lemma}
\label{lem:PDEForDevelopment}There exists $\lambda^{*}>0$ such that
if $\lambda<\lambda^{*}$, the function~$F_\lambda$ defined by
\begin{equation} \label{eq:def-F}
F_\lambda(z)
= (\lambda M) \Phi(z)
\begin{pmatrix}0\\ 0\\ 1 \end{pmatrix}
=\sum_{n=0}^{\infty}\lambda^{n}M\proj_{n}\left(\Phi(z)\right)
\begin{pmatrix}0\\ 0\\ 1 \end{pmatrix}
\end{equation}
is twice continuously differentiable on $\mathbb{D}$, and satisfies
\[
\Delta F_\lambda(z)=-2\lambda\sum_{i=1}^{2}Me_{i}\frac{\partial F_\lambda}{\partial z_{i}}(z)-\lambda^{2}\left(\sum_{i=1}^{2}\left(Me_{i}\right)^{2}\right)F_\lambda(z)
\]
with $F_\lambda(z)=(0,0,1)$ for $z\in\partial\mathbb{D}$. Here $(e_1,e_2)$ denotes the canonical basis of $\mathbb{R}^2$. 
\end{lemma}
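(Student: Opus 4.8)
The genuine analytic content of the statement --- that the Laplacian may be applied term by term to the series defining $F_\lambda$ --- has already been secured in Lemma~\ref{lem:DifferentiabilityOfExpectedSignature}; what remains is essentially algebraic bookkeeping built on the Lyons--Ni system \eqref{eq:LyonsNiPDE}. The plan is as follows. Fix $\lambda < \lambda^*$ with $\lambda^*$ as in Lemma~\ref{lem:DifferentiabilityOfExpectedSignature}, abbreviate $\proj_n\Phi := \proj_n(\Phi(z))$, and recall from that lemma that $F_\lambda$ is $C^2$ on $\mathbb{D}$ with
\[
\Delta F_\lambda(z) = \sum_{n=0}^\infty \lambda^n \Delta\bigl(M\proj_n\Phi\bigr)(0,0,1)^{\top}.
\]
Since $M$ restricts on each tensor level $(\R^2)^{\otimes n}$ to a fixed linear map into $M_{3\times 3}(\R)$, it commutes with each $\partial_{z_i}$ and hence with $\Delta$, so $\Delta(M\proj_n\Phi) = M(\Delta\proj_n\Phi)$. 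The terms $n = 0$ and $n = 1$ contribute nothing, because $\proj_0\Phi \equiv 1$ and $\proj_1\Phi \equiv 0$ on $\mathbb{D}$; thus only $n \ge 2$ survives, and there \eqref{eq:LyonsNiPDE} applies.

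Next I would push $M$ through the right-hand side of \eqref{eq:LyonsNiPDE} using the multiplicativity relation \eqref{eq:LinearMapExtension}: for $w \in (\R^2)^{\otimes (n-1)}$ one has $M(e_i \otimes w) = (Me_i)\,M(w)$, and for $w \in (\R^2)^{\otimes (n-2)}$ one has $M\bigl((\sum_i e_i \otimes e_i)\otimes w\bigr) = \bigl(\sum_i (Me_i)^2\bigr)M(w)$. Together with $M\partial_{z_i} = \partial_{z_i}M$ this gives, for $n \ge 2$,
\[
M\bigl(\Delta\proj_n\Phi\bigr) = -2\sum_{i=1}^2 (Me_i)\,\partial_{z_i}\bigl(M\proj_{n-1}\Phi\bigr) - \Bigl(\sum_{i=1}^2 (Me_i)^2\Bigr) M\proj_{n-2}\Phi .
\]
Multiplying by $\lambda^n$, applying the result to $(0,0,1)^{\top}$, summing over $n \ge 2$, and reindexing ($n \mapsto n-1$ in the first sum, $n \mapsto n-2$ in the second) extracts the factors $\lambda$ and $\lambda^2$ respectively. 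Here I would once more invoke Lemma~\ref{lem:DifferentiabilityOfExpectedSignature} to pull $\partial_{z_i}$ outside the uniformly convergent series, observing that the single missing term $M\proj_0\Phi = M(1) = \Id$ is constant in $z$ and so drops out of the differentiated sum; this is exactly why the first reindexed series reproduces $\partial_{z_i}F_\lambda$ (cf.\ \eqref{eq:def-F}) and not something with a spurious extra term. The outcome is the claimed identity $\Delta F_\lambda = -2\lambda \sum_i Me_i\,\partial_{z_i}F_\lambda - \lambda^2\bigl(\sum_i (Me_i)^2\bigr)F_\lambda$.

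For the boundary condition I would use that on $\partial\mathbb{D}$ the Lyons--Ni data give $\proj_n\Phi(z) = 0$ for all $n \ge 1$ while $\proj_0\Phi(z) = 1$, and that $M$ extended via \eqref{eq:LinearMapExtension} sends the unit $1 \in T((\R^2))$ to the identity matrix $\Id \in M_{3\times 3}(\R)$. Hence on $\partial\mathbb{D}$ the series $\sum_n \lambda^n M\proj_n(\Phi(z))$ collapses to $\Id$, and $F_\lambda(z) = \Id\,(0,0,1)^{\top} = (0,0,1)$. The only point requiring real care in the whole argument is this low-order bookkeeping --- the vanishing of the $n=0,1$ Laplacian terms, the constancy of the $n=0$ term after applying $M$, and the index shifts that generate the powers of $\lambda$ --- precisely because \eqref{eq:LyonsNiPDE} is valid only for $n \ge 2$; everything else is inherited from Lemma~\ref{lem:DifferentiabilityOfExpectedSignature}.
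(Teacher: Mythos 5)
Your proposal is correct and follows essentially the same route as the paper: apply $M$ to the Lyons--Ni system \eqref{eq:LyonsNiPDE} using the multiplicativity \eqref{eq:LinearMapExtension}, multiply by $\lambda^n$, sum, apply to $(0,0,1)^{\top}$, and invoke Lemma~\ref{lem:DifferentiabilityOfExpectedSignature} to interchange derivatives with the sum. Your version is in fact somewhat more careful than the paper's about the $n=0,1$ terms, the index shifts producing the factors $\lambda$ and $\lambda^2$, and the boundary condition, all of which check out.
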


\begin{proof}
If we apply the linear map $M$ to the PDE (\ref{eq:LyonsNiPDE}),
then we have a matrix-valued PDE
\begin{multline}
 \Delta\left(M\proj_{n}\left(\Phi(z)\right)\right)\\
= -2\sum_{i=1}^{2}Me_{i}\frac{\partial M\proj_{n-1}\left(\Phi(z)\right)}{\partial z_{i}}-\left(\sum_{i=1}^{2}\left(Me_{i}\right)^{2}\right)M\proj_{n-2}\left(\Phi(z)\right),\label{eq:hyperbolicPDE}
\end{multline}
together with the boundary condition 
\begin{align*}
M\proj_{0}\left(\Phi(z)\right) & =I_{3\times3}\\
M\proj_{1}\left(\Phi(z)\right) & =0_{3\times3}.
\end{align*}

We may multiply both sides with $\lambda^{n}$, sum to infinity and
apply to the vector $(0,0,1)$ to get
\begin{align*}
 & \sum_{n=0}^{\infty}\lambda^{n}\Delta\left(M\proj_{n}\left(\Phi(z)\right)\right)\begin{pmatrix}0\\
0\\
1
\end{pmatrix}\\
= & -2\lambda\sum_{i=1}^{2}Me_{i}\sum_{n=0}^{\infty}\lambda^{n}\frac{\partial}{\partial z_{i}}\proj_{n}\left(M\Phi(z)\right)\begin{pmatrix}0\\
0\\
1
\end{pmatrix}-\lambda^{2}\left(\sum_{i=1}^{2}\left(Me_{i}\right)^{2}\right)F_\lambda(z).
\end{align*}
By Lemma \ref{lem:DifferentiabilityOfExpectedSignature}, each of
the infinite sums converges and we may take the derivatives outside
the infinite sum. 
\end{proof}

\section{A polar decomposition for the development}
\label{sec:polarDecomposition}

Let $x=\left(x_{1},x_{2}\right)^{t}\in\mathbb{R}^{2}$. Recall that
\[
M(x)=\begin{pmatrix}
0 & 0 & x_{1}\\
0 & 0 & x_{2}\\
x_{1} & x_{2} & 0
\end{pmatrix}.
\]

We may consider $M(x)$ as a linear endomorphism of $\mathbb{R}^{2}\oplus\mathbb{R}$
mapping $\left(v,\alpha\right)$ to $\left(\alpha x,\left\langle x,v\right\rangle \right)$.

\begin{lemma}
\label{lem:RotatedDevelopment} 
For any linear map $R: \R^2 \to \R^2$, 
\[
M\left(R(x)\right)=\left(R\oplus1\right)M(x)\left(R^{*}\oplus1\right),
\]
where $R^{*}$ is the transpose of $R$.

\end{lemma}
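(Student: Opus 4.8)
The plan is to verify the claimed identity by direct computation, using the description of $M(x)$ recorded just above the statement: viewed as an endomorphism of $\mathbb{R}^{2}\oplus\mathbb{R}$, it sends $(v,\alpha)$ to $(\alpha x,\langle x,v\rangle)$. With this in hand, the right-hand side $(R\oplus 1)\,M(x)\,(R^{*}\oplus 1)$ is a composition of three explicit maps, and I would simply track the image of a generic element $(v,\alpha)\in\mathbb{R}^{2}\oplus\mathbb{R}$ through it and compare with $M(R(x))(v,\alpha)$.

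Carrying this out: $(R^{*}\oplus 1)$ sends $(v,\alpha)$ to $(R^{*}v,\alpha)$; then $M(x)$ sends this to $(\alpha x,\langle x,R^{*}v\rangle)$; finally $(R\oplus 1)$ sends it to $(\alpha R(x),\langle x,R^{*}v\rangle)$. The only step of substance is the identity $\langle x,R^{*}v\rangle=\langle R(x),v\rangle$, which is precisely the defining property of the transpose. The composite map $(v,\alpha)\mapsto(\alpha R(x),\langle R(x),v\rangle)$ is, by the same description, exactly $M(R(x))$, which is the assertion of the lemma. An equivalent route is to perform the identical manipulation at the level of $3\times 3$ block matrices, writing $M(x)$ in block form with a $2\times 2$ zero block, $x$ in the last column and $x^{t}$ in the last row, multiplying on the left by the block-diagonal matrix $R\oplus 1$ and on the right by $R^{*}\oplus 1$, and using $x^{t}R^{*}=(R(x))^{t}$.

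I do not expect any genuine obstacle: the statement is an elementary bookkeeping identity expressing how the linear map $M$ intertwines with the conjugation action. The only points that need a little care are not interchanging the roles of $R$ and $R^{*}$ in the two outer factors, and checking that the bilinear pairing appearing in the definition of $M$ is the standard Euclidean inner product on $\mathbb{R}^{2}$, so that the transpose relation applies without modification. Once this is pinned down the verification is a single line, and it is exactly the algebraic fact that will let us conjugate the PDE of Lemma~\ref{lem:PDEForDevelopment} into rotationally symmetric form in Section~\ref{sec:polarDecomposition}.
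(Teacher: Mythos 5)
Your proposal is correct and is essentially identical to the paper's own proof: both track a generic $(v,\alpha)\in\mathbb{R}^2\oplus\mathbb{R}$ through the composition and use $\langle x,R^*v\rangle=\langle Rx,v\rangle$. No issues.
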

\begin{proof}
Note that
\begin{align*}
  (R\oplus1)M(x)(R^{*}\oplus1)(v,\alpha)
&= (R\oplus1)M(x)(R^{*}v,\alpha)
= (R\oplus1)(\alpha x,\left\langle x,R^{*}v\right\rangle )\\
&= (\alpha R(x),\left\langle x,R^{*}v\right\rangle )
= (\alpha R(x),\left\langle Rx,v\right\rangle )
= M(R(x)). \qedhere
\end{align*}
\end{proof}

In what follows, we will use the notation 
\[
\triangle_{n}\left(0,t\right)=\left\{ \left(t_{1},\ldots,t_{n}\right):0<t_{1}<\cdots<t_{n}<t\right\} .
\]

\begin{corollary}
\label{cor:RotatedExpectedDevelopment} Let $R:\mathbb{R}^2\rightarrow \mathbb{R}^2$ be the rotation map
\[
z \rightarrow \begin{pmatrix}
\cos\theta & -\sin\theta \\
\sin\theta & \cos\theta 
\end{pmatrix} z.
\]
Then
\[
\left(\lambda M\right)\Phi\left(R(z)\right)=\left(R\oplus1\right)\left(\lambda M\right)\Phi(z) \left(R^{*}\oplus1\right).
\]
\end{corollary}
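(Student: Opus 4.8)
The plan is to combine the rotational invariance of Brownian motion with the algebraic transformation rule for $M$ from Lemma~\ref{lem:RotatedDevelopment}, working level by level in the tensor algebra and only reassembling the series at the end.

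First I would reduce the statement to a level-by-level identity for $\Phi$ itself. Writing $B^z_t = z + W_t$ for a standard Brownian motion $W$ started at the origin, the rotational invariance of $W$ gives $R B^z \overset{\mathcal D}{=} B^{R(z)}$; since $|Rw| = |w|$ the exit times of $\mathbb D$ agree, so $X^{R(z)}_\cdot = B^{R(z)}_{\cdot\wedge\tau} \overset{\mathcal D}{=} R\bigl(B^z_{\cdot\wedge\tau}\bigr) = (R X^z)_\cdot$ as processes. Because the (Stratonovich) iterated integrals are linear in the integrator and $R$ is constant, $\proj_n S(RY)_{0,T} = R^{\otimes n}\,\proj_n S(Y)_{0,T}$ for any admissible path $Y$; applying this to $Y = X^z$ and taking the level-by-level expectation yields $\proj_n \Phi(R(z)) = R^{\otimes n}\,\proj_n \Phi(z)$ for every $n$.

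Next I would push this through $M$. By the defining relation \eqref{eq:LinearMapExtension}, for a pure tensor $M\bigl(R^{\otimes n}(v_1\otimes\cdots\otimes v_n)\bigr) = M(Rv_1)\cdots M(Rv_n)$, and Lemma~\ref{lem:RotatedDevelopment} rewrites each factor as $(R\oplus1)M(v_i)(R^*\oplus1)$. Since $R$ is a rotation, $R^*R = \Id$, so the inner factors $(R^*\oplus1)(R\oplus1)$ telescope and $M(Rv_1)\cdots M(Rv_n) = (R\oplus1)\,M(v_1\otimes\cdots\otimes v_n)\,(R^*\oplus1)$; by linearity this extends to $M(R^{\otimes n} w) = (R\oplus1)\,M(w)\,(R^*\oplus1)$ for all $w\in(\R^2)^{\otimes n}$. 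Multiplying by $\lambda^n$, applying $M$ to $\proj_n\Phi(R(z)) = R^{\otimes n}\proj_n\Phi(z)$ and summing then gives
\begin{align*}
(\lambda M)\Phi(R(z)) &= \sum_{n\ge0}\lambda^n M\bigl(\proj_n\Phi(R(z))\bigr)\\
&= (R\oplus1)\Bigl(\sum_{n\ge0}\lambda^n M\bigl(\proj_n\Phi(z)\bigr)\Bigr)(R^*\oplus1)\\
&= (R\oplus1)\,(\lambda M)\Phi(z)\,(R^*\oplus1),
\end{align*}
where for $\lambda<\lambda^*$ all series converge absolutely by Lemma~\ref{lem:DifferentiabilityOfExpectedSignature} (and the identity holds level by level as formal power series in $\lambda$ in any case).

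The only genuinely delicate point is the first step: one must check carefully that the Stratonovich pathwise signature intertwines with the linear map $R$ and that the level-by-level expectation commutes with this intertwining, which is where the rotational invariance of Brownian motion and the linearity of iterated integration are used. Everything after that is purely algebraic and, thanks to the orthogonality relation $R^*R=\Id$, reduces to a telescoping product.
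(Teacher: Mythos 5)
Your proposal is correct and follows essentially the same route as the paper: rotational invariance of Brownian motion (so that $B^{R(z)} \overset{\mathcal D}{=} R(B^z)$ with the same exit time), Lemma~\ref{lem:RotatedDevelopment} applied factor by factor, and the telescoping of $(R^*\oplus1)(R\oplus1)=\Id$ by orthogonality. The only cosmetic difference is that you first record the identity $\proj_n\Phi(R(z))=R^{\otimes n}\proj_n\Phi(z)$ in the tensor algebra and then push it through $M$, whereas the paper applies $M$ directly to the Stratonovich differentials inside the iterated integrals; the substance is identical.
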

\begin{proof}
Brownian motion $B^{R(z)}$ starting at $R(z)$ has the same distribution
as the rotated Brownian motion $R\left(B^{z}\right)$, where $B^{z}$ starts
from $z$. Let $\circ \mathrm{d}$ denote the Stratonovich differential. Then
\begin{align*}
 \left(\lambda M\right)\Phi\left(R(z)\right)
= & \sum_{n=0}^{\infty} \lambda^n \mathbb{E}^{R(z)}\left[\int_{\triangle_{n}\left(0,\tau_{\mathbb{D}}\right)}M\left(\circ\mathrm{d}B^{z}_{t_{1}}\right)\cdots M\left(\circ\mathrm{d}B^{z}_{t_{n}}\right)\right]\\
= & \sum_{n=0}^{\infty} \lambda^n \mathbb{E}^{z}\left[\int_{\triangle_{n}\left(0,\tau_{\mathbb{D}}\right)}M\left(R\left(\circ\mathrm{d}B^{z}_{t_{1}}\right)\right)\cdots M\left(R\left(\circ\mathrm{d}B^{z}_{t_{n}}\right)\right)\right].
\end{align*}
By Lemma \ref{lem:RotatedDevelopment}
\begin{align*}
 & \int_{\triangle_{n}\left(0,\tau_{\mathbb{D}}\right)}M\left(R\left(\circ\mathrm{d}B_{t_{1}}\right)\right)\cdots M\left(R\left(\circ\mathrm{d}B_{t_{n}}\right)\right)\\
= & \int_{\triangle_{n}\left(0,\tau_{\mathbb{D}}\right)}\left(R\oplus1\right)M\left(\circ\mathrm{d}B_{t_{1}}\right)\left(R^{*}\oplus1\right)\cdots\left(R\oplus1\right)M\left(\circ\mathrm{d}B_{t_{n}}\right)\left(R^{*}\oplus1\right).
\end{align*}
As $R$ is orthogonal, we have 
\begin{align*}
 & \int_{\triangle_{n}\left(0,\tau_{\mathbb{D}}\right)}M\left(R\left(\circ\mathrm{d}B_{t_{1}}\right)\right)\cdots M\left(R\left(\circ\mathrm{d}B_{t_{n}}\right)\right)\\
= & \left(R\oplus1\right) \int_{\triangle_{n}\left(0,\tau_{\mathbb{D}}\right)}M\left(\circ\mathrm{d}B_{t_{1}}\right)\cdots M\left(\circ\mathrm{d}B_{t_{n}}\right) \left(R^{*}\oplus1\right).
\end{align*}
Therefore, 
\begin{align*}
 & \left(\lambda M\right)\Phi\left(R(z)\right)\\
&= \left(R\oplus1\right)\sum_{n=0}^{\infty}\lambda^{n}\mathbb{E}^{z}\int_{\triangle_{n}\left(0,\tau_{\mathbb{D}}\right)}M\left(\circ\mathrm{d}B^{z}_{t_{1}}\right)\cdots M\left(\circ\mathrm{d}B^{z}_{t_{n}}\right)\left(R^{*}\oplus1\right) \\
&= \left(R\oplus1\right)\left(\lambda M\right)\Phi(z)\left(R^{*}\oplus1\right).
\qedhere
\end{align*}
\end{proof}

\begin{corollary}
\label{cor:A and C} Define the functions
$A_{\lambda},B_{\lambda},C_{\lambda}:\left[0,1\right]\rightarrow\mathbb{R}$
by 
\begin{equation}
\begin{pmatrix}
A_{\lambda}(r)\\
B_{\lambda}(r)\\
C_{\lambda}(r)
\end{pmatrix}=F_\lambda(r,0)\label{eq:DefinitionABC}
\end{equation}
where $F_\lambda$ is the function defined by~\eqref{eq:def-F}.
In polar coordinates, the expression of~$F_\lambda$ reads
\[
F(r \cos \theta, r \sin \theta)=\begin{pmatrix}
\cos\theta & -\sin\theta & 0\\
\sin\theta & \cos\theta & 0\\
0 & 0 & 1
\end{pmatrix}\begin{pmatrix}
A_{\lambda}\left(r \right)\\
B_{\lambda}(r)\\
C_{\lambda}(r)
\end{pmatrix}.
\]
Additionally, there exists $\lambda^{*}>0$ such that if $\lambda<\lambda^{*}$, then $A_{\lambda},B_{\lambda},C_{\lambda}$ are twice continuously differentiable
functions in the variable $r$ for all $r\in\left[0,1\right]$. 
\end{corollary}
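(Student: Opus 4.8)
The plan is to deduce everything from the rotational-invariance identity of Corollary~\ref{cor:RotatedExpectedDevelopment} together with the $C^2$-regularity already established in Lemma~\ref{lem:PDEForDevelopment}. The first observation I would make is that for the rotation $R = R_\theta$ the matrix $R^{*}\oplus 1$ fixes the vector $(0,0,1)^{t}$, because $R^{*}$ acts only on the first two coordinates. Applying both sides of the identity
\[
(\lambda M)\Phi\bigl(R(z)\bigr) = \left(R\oplus 1\right)(\lambda M)\Phi(z)\left(R^{*}\oplus 1\right)
\]
to $(0,0,1)^{t}$ and recalling the definition~\eqref{eq:def-F} of $F_\lambda$ then gives the covariance relation
\[
F_\lambda\bigl(R_\theta(z)\bigr) = \left(R_\theta\oplus 1\right) F_\lambda(z), \qquad z\in\mathbb{D},\ \lambda<\lambda^{*}.
\]

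Next I would specialise this to $z=(r,0)$ with $r\in[0,1]$, so that $R_\theta(z) = (r\cos\theta, r\sin\theta)$ sweeps out every point of $\mathbb{D}$ as $(r,\theta)$ ranges over $[0,1]\times\mathbb{R}$. Writing $\bigl(A_\lambda(r),B_\lambda(r),C_\lambda(r)\bigr)^{t} := F_\lambda(r,0)$ as in~\eqref{eq:DefinitionABC} and spelling out $R_\theta\oplus 1$ as the $3\times 3$ block matrix appearing in the statement yields exactly the claimed polar representation of $F_\lambda$.

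For the regularity assertion I would take $\lambda^{*}$ to be the constant furnished by Lemma~\ref{lem:PDEForDevelopment}, so that $F_\lambda$ is twice continuously differentiable on the closed disc $\mathbb{D}$ whenever $\lambda<\lambda^{*}$. Since $r\mapsto (r,0)$ is a smooth (affine) map from $[0,1]$ into $\mathbb{D}$, the chain rule shows that $r\mapsto F_\lambda(r,0)$ is twice continuously differentiable on $[0,1]$, hence so are its three scalar components $A_\lambda$, $B_\lambda$, $C_\lambda$.

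I do not anticipate a genuine obstacle: the only points that require a little care are (i) verifying that $R^{*}\oplus 1$ really fixes $(0,0,1)^{t}$, which is what makes the third column of $(\lambda M)\Phi$ transform covariantly under rotations, and (ii) making sure that the $C^2$-regularity invoked is the one valid up to and including the boundary circle $r=1$ — and at the origin $r=0$, where polar coordinates degenerate but $F_\lambda$ itself is perfectly smooth — both of which are already supplied by Lemma~\ref{lem:PDEForDevelopment} (together with Lemma~\ref{lem:DifferentiabilityOfExpectedSignature}).
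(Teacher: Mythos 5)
Your proposal is correct and follows essentially the same route as the paper: it invokes the rotational covariance of Corollary~\ref{cor:RotatedExpectedDevelopment}, uses that $R^{*}\oplus 1$ fixes $(0,0,1)^{t}$ to obtain $F_\lambda(R_\theta(z)) = (R_\theta\oplus 1)F_\lambda(z)$, specialises to $z=(r,0)$, and derives the $C^2$-regularity of $A_\lambda, B_\lambda, C_\lambda$ by restricting the $C^2$ function $F_\lambda$ from Lemma~\ref{lem:PDEForDevelopment} to the segment $r\mapsto(r,0)$. Your explicit remark that $R^{*}\oplus 1$ fixes $(0,0,1)^{t}$ makes transparent a step the paper leaves implicit in its computation.
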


\begin{proof}
The functions $A_{\lambda},B_{\lambda},C_{\lambda}$ are twice continuously differentiable because $F_\lambda(r,0)$ is twice continuously differentiable by Lemma \ref{lem:PDEForDevelopment}.
Let $R: \mathbb{R}^2 \to \mathbb{R}^2$ be the rotation of angle~$\theta$.
For $z = (r \cos \theta, r \sin \theta) = R(r, 0)$,
the definition of~$F_\lambda$ gives
\begin{equation*}
  F_\lambda(z)
= \left(\lambda M\right)\Phi(z)\begin{pmatrix}
0\\
0\\
1
\end{pmatrix}
= \left(\lambda M\right)\Phi\left(R(r,0)\right)
\begin{pmatrix}
0\\
0\\
1
\end{pmatrix}
\end{equation*}
By Corollary \ref{cor:RotatedExpectedDevelopment}, one has
\[
F_\lambda(z) = (R \oplus 1) (\lambda M) \Phi(R(r,0)) (R^{*} \oplus 1)
\begin{pmatrix}
0\\
0\\
1
\end{pmatrix}
= (R \oplus 1) F_\lambda(r, 0)
\]
where
\[ R \oplus 1 = \begin{pmatrix}
\cos\theta & -\sin\theta & 0\\
\sin\theta & \cos\theta & 0\\
0 & 0 & 1
\end{pmatrix}. \qedhere \]
\end{proof}

\section{ODE for $A_{\lambda},B_{\lambda},C_{\lambda}$}
\label{sec:ODEABC}

\begin{lemma}
\label{lem:EquationForABC}There exists $\lambda^{*}>0$ such that
for $\lambda<\lambda^{*}$, the functions $A_{\lambda},B_{\lambda},C_{\lambda}$ defined in Lemma
\ref{cor:A and C} satisfy
\begin{equation}
\begin{split}r^{2}A_{\lambda}^{\prime\prime}(r)+rA_{\lambda}^{\prime}(r)-A_{\lambda}(r)+\lambda^{2}r^{2}A_{\lambda}(r)+2\lambda r^{2}C_{\lambda}^{\prime}(r) & =0\\
r^{2}B_{\lambda}^{\prime\prime}(r)+B_{\lambda}^{\prime}(r)r-B_{\lambda}(r)+r^{2}\lambda^{2}B_{\lambda}(r) & =0\\
C_{\lambda}^{\prime}(r)+rC_{\lambda}^{\prime\prime}(r)+2\lambda^{2}rC_{\lambda}(r)+2\lambda rA_{\lambda}^{\prime}(r)+2\lambda A_{\lambda}(r) & =0
\end{split}
\label{eq:EquationsForABCRepeat}
\end{equation}
and 
\begin{align*}
A_{\lambda}(0)=0,\,B_{\lambda}(0)=0, & A_{\lambda}(1)=0\\
C_{\lambda}^{\prime}(0)=0,\,B_{\lambda}(1)=0, & C_{\lambda}(1)=1.
\end{align*}
\end{lemma}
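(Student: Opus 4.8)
The plan is to derive the stated ODE system directly from the PDE for $F_\lambda$ given in Lemma~\ref{lem:PDEForDevelopment} by substituting the polar form of Corollary~\ref{cor:A and C} and computing the Laplacian in polar coordinates. First I would write $F_\lambda(r\cos\theta,r\sin\theta) = (R_\theta\oplus 1)(A_\lambda(r),B_\lambda(r),C_\lambda(r))^t$, where $R_\theta$ is the planar rotation by $\theta$, and expand the components: the first component is $A_\lambda(r)\cos\theta - B_\lambda(r)\sin\theta$, the second is $A_\lambda(r)\sin\theta + B_\lambda(r)\cos\theta$, and the third is simply $C_\lambda(r)$. Applying the polar Laplacian $\Delta = \partial_r^2 + r^{-1}\partial_r + r^{-2}\partial_\theta^2$ to each of these, the $\theta$-derivatives bring down factors of $-1$ acting on the angular parts; e.g.\ for the third (purely radial) component $\Delta C_\lambda = C_\lambda''(r) + r^{-1}C_\lambda'(r)$, while for the first and second the angular part contributes $-r^{-2}(A_\lambda\cos\theta - B_\lambda\sin\theta)$, etc. This produces, after matching coefficients of $\cos\theta$ and $\sin\theta$ separately, the left-hand sides $A_\lambda'' + r^{-1}A_\lambda' - r^{-2}A_\lambda$ and $B_\lambda'' + r^{-1}B_\lambda' - r^{-2}B_\lambda$ for the in-plane components (matching, after multiplying by $r^2$, the $r^2 A_\lambda'' + r A_\lambda' - A_\lambda$ and $r^2 B_\lambda'' + r B_\lambda' - B_\lambda$ in the statement).

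Next I would handle the right-hand side of the PDE, namely $-2\lambda\sum_{i=1}^2 (Me_i)\,\partial_{z_i}F_\lambda - \lambda^2\bigl(\sum_i (Me_i)^2\bigr)F_\lambda$. Here $Me_1$ and $Me_2$ are the two fixed $3\times3$ matrices obtained from~\eqref{eq:HyperbolicDevelopment}, namely $Me_1$ has $1$ in entries $(1,3)$ and $(3,1)$, and $Me_2$ has $1$ in entries $(2,3)$ and $(3,2)$. A direct computation gives $\sum_{i=1}^2 (Me_i)^2 = \mathrm{diag}(1,1,2)$, so the zeroth-order term contributes $-\lambda^2 A_\lambda$, $-\lambda^2 B_\lambda$, $-2\lambda^2 C_\lambda$ to the three components respectively — matching the $+\lambda^2 r^2 A_\lambda$, $+\lambda^2 r^2 B_\lambda$, $+2\lambda^2 r C_\lambda$ terms after the $r^2$ (resp.\ $r$) rescaling. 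For the first-order term, the cleanest route is to evaluate the whole PDE along the positive $x$-axis, i.e.\ at $z=(r,0)$ where $\theta=0$, so that $F_\lambda(r,0)=(A_\lambda(r),B_\lambda(r),C_\lambda(r))^t$ directly. One still needs $\partial_{z_1}F_\lambda$ and $\partial_{z_2}F_\lambda$ at such points; $\partial_{z_1}F_\lambda(r,0)=(A_\lambda'(r),B_\lambda'(r),C_\lambda'(r))^t$ is immediate, while $\partial_{z_2}F_\lambda(r,0)$ must be computed from the polar representation using $\partial_{z_2} = \sin\theta\,\partial_r + r^{-1}\cos\theta\,\partial_\theta$ evaluated at $\theta=0$, which picks out $r^{-1}\partial_\theta$ of the angular parts: this yields $\partial_{z_2}F_\lambda(r,0) = (-B_\lambda(r)/r,\, A_\lambda(r)/r,\, 0)^t$. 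Substituting, $Me_1\,\partial_{z_1}F_\lambda(r,0) = (C_\lambda'(r),0,A_\lambda'(r))^t$ and $Me_2\,\partial_{z_2}F_\lambda(r,0) = (0, 0, A_\lambda(r)/r)^t$; summing and multiplying through by the appropriate power of $r$ produces exactly the cross terms $2\lambda r^2 C_\lambda'$ in the $A$-equation and $2\lambda r A_\lambda' + 2\lambda A_\lambda$ in the $C$-equation, while the $B$-equation decouples as stated.

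Finally I would read off the boundary/regularity conditions. The condition $F_\lambda = (0,0,1)^t$ on $\partial\mathbb D$ from Lemma~\ref{lem:PDEForDevelopment}, restricted to $z=(1,0)$, gives immediately $A_\lambda(1)=0$, $B_\lambda(1)=0$, $C_\lambda(1)=1$. The conditions at $r=0$ come from smoothness of $F_\lambda$ at the origin (Lemma~\ref{lem:DifferentiabilityOfExpectedSignature} gives $C^2$, in fact $C^\infty$, regularity on $\mathbb D$): a function of the form $A_\lambda(r)\cos\theta$ that is continuous at the origin in Cartesian coordinates forces $A_\lambda(0)=0$, and likewise $B_\lambda(0)=0$; the third component $C_\lambda(r)$, being a smooth radial function, must have vanishing radial derivative at the origin, giving $C_\lambda'(0)=0$. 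The existence of $\lambda^*>0$ for which all of this is valid is inherited verbatim from Lemma~\ref{lem:PDEForDevelopment} and Corollary~\ref{cor:A and C}. The main obstacle is purely computational bookkeeping: one must be careful to correctly convert the Cartesian Laplacian and the Cartesian partial derivatives $\partial_{z_1},\partial_{z_2}$ appearing in the right-hand side into polar-coordinate operators and to track the $\cos\theta$/$\sin\theta$ structure so that the $A$- and $B$-equations separate cleanly; the $1/r$ and $1/r^2$ singular coefficients also mean one should either work at $\theta=0$ throughout or justify the manipulations away from the origin and then invoke the already-established $C^2$ regularity to extend to $r=0$.
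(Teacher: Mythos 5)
Your proposal is correct and follows essentially the same route as the paper: substitute the polar decomposition of $F_\lambda$ into the PDE of Lemma~\ref{lem:PDEForDevelopment}, convert the Laplacian and the first-order terms to polar coordinates (your matrix computations $\sum_i (Me_i)^2 = \mathrm{diag}(1,1,2)$ and the cross terms all check out), and read off the boundary data at $r=1$ from the Dirichlet condition. The only cosmetic differences are that you evaluate the PDE along $\theta=0$ instead of matching $\cos\theta$/$\sin\theta$ coefficients for general $\theta$, and you obtain $A_\lambda(0)=B_\lambda(0)=C_\lambda'(0)=0$ from regularity of $F_\lambda$ at the origin rather than by substituting $r=0$ into the (continuity-extended) ODEs — both are valid.
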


\begin{proof}
By Corollary \ref{cor:A and C}, 
\[
F_\lambda\begin{pmatrix}
r\cos\theta\\
r\sin\theta
\end{pmatrix}=\begin{pmatrix}
\cos\theta A_{\lambda}(r)-\sin\theta B_{\lambda}(r)\\
\sin\theta A_{\lambda}(r)+\cos\theta B_{\lambda}(r)\\
C_{\lambda}(r)
\end{pmatrix}.\label{eq:FInTermsABC}
\]

As $A_{\lambda},B_{\lambda},C_{\lambda}$ are twice continuously differentiable for $\lambda<\lambda^{*}$, we may substitute (\ref{eq:FInTermsABC}) for $F_\lambda$ into the equation in Lemma \ref{lem:PDEForDevelopment}, which
gives for $r>0$
\begin{align}
\Delta\begin{pmatrix}
F_\lambda^{(1)}\\
F_\lambda^{(2)}\\
F_\lambda^{(3)}
\end{pmatrix} & =\begin{pmatrix}
\partial_{rr}F_\lambda^{(1)}+\frac{1}{r}\partial_{r}F_\lambda^{(1)}+\frac{1}{r^{2}}\partial_{\theta\theta}F_\lambda^{(1)}\\
\partial_{rr}F_\lambda^{(2)}+\frac{1}{r}\partial_{r}F_\lambda^{(2)}+\frac{1}{r^{2}}\partial_{\theta\theta}F_\lambda^{(2)}\\
\partial_{rr}F_\lambda^{(3)}+\frac{1}{r}\partial_{r}F_\lambda^{(3)}+\frac{1}{r^{2}}\partial_{\theta\theta}F_\lambda^{(3)}
\end{pmatrix}\nonumber \\
 & =\begin{pmatrix}
\cos\theta A_{\lambda}^{\prime\prime}(r)-\sin\theta B_{\lambda}^{\prime\prime}(r)+\frac{1}{r}\left(\cos\theta A_{\lambda}^{\prime}(r)-\sin\theta B_{\lambda}^{\prime}(r)\right)\\
+\frac{1}{r^{2}}\left(-\cos\theta A_{\lambda}(r)+\sin\theta B_{\lambda}(r)\right)\\
\sin\theta A_{\lambda}^{\prime\prime}(r)+\cos\theta B_{\lambda}^{\prime\prime}(r)+\frac{1}{r}\left(\sin\theta A_{\lambda}^{\prime}(r)+\cos\theta B_{\lambda}^{\prime}(r)\right)\\
+\frac{1}{r^{2}}\left(-\sin\theta A_{\lambda}(r)-\cos\theta B_{\lambda}(r)\right)\\
C_{\lambda}^{\prime\prime}(r)+\frac{1}{r}C_{\lambda}^{\prime}(r)
\end{pmatrix}.\label{eq:PolarLaplacian}
\end{align}
Using the identities
\begin{align*}
\frac{\partial}{\partial z_{1}} &=\frac{\partial r}{\partial z_{1}}\frac{\partial}{\partial r}+\frac{\partial\theta}{\partial z_{1}}\frac{\partial}{\partial\theta}
  =\cos\theta\frac{\partial}{\partial r}-\frac{\sin\theta}{r}\frac{\partial}{\partial\theta}, \\
\frac{\partial}{\partial z_{2}} &=\frac{\partial r}{\partial z_{2}}\frac{\partial}{\partial r}+\frac{\partial}{\partial\theta}\frac{\partial\theta}{\partial z_{2}}
  =\sin\theta\frac{\partial}{\partial r}+\frac{\cos\theta}{r}\frac{\partial}{\partial\theta},
\end{align*}
the right hand side of the PDE in Lemma \ref{lem:PDEForDevelopment}
is
\begin{align}
 & \begin{pmatrix}
-\lambda^{2}\cos\theta A_{\lambda}(r)+\lambda^{2}\sin\theta B_{\lambda}(r)-2\lambda\cos\theta C_{\lambda}^{\prime}(r)\\
-\lambda^{2}\sin\theta A_{\lambda}(r)-\lambda^{2}\cos\theta B_{\lambda}(r)-2\lambda\sin\theta C_{\lambda}^{\prime}(r)\\
-2\lambda^{2}C_{\lambda}(r)-2\lambda\left[\cos^{2}\theta A_{\lambda}^{\prime}(r)-\cos\theta\sin\theta B_{\lambda}^{\prime}(r)\right]\\
\dots - 2\lambda\left[\frac{\sin\theta}{r}\left(\sin\theta A_{\lambda}(r)+\cos\theta B_{\lambda}(r)\right)\right]\\
\dots-2\lambda\left[\sin^{2}\theta A_{\lambda}^{\prime}(r)+\sin\theta\cos\theta B_{\lambda}^{\prime}(r)\right]\\
\dots-2\lambda\left[\frac{\cos\theta}{r}\cos\theta A_{\lambda}(r)-\frac{\cos\theta}{r}\sin\theta B_{\lambda}(r)\right]
\end{pmatrix}\nonumber \\
= & \begin{pmatrix}
-\lambda^{2}\cos\theta A_{\lambda}(r)+\lambda^{2}\sin\theta B_{\lambda}(r)-2\lambda\cos\theta C_{\lambda}^{\prime}(r)\\
-\lambda^{2}\sin\theta A_{\lambda}(r)-\lambda^{2}\cos\theta B_{\lambda}(r)-2\lambda\sin\theta C_{\lambda}^{\prime}(r)\\
-2\lambda^{2}C_{\lambda}(r)-2\lambda A_{\lambda}^{\prime}(r)-\frac{2\lambda}{r}A_{\lambda}(r)
\end{pmatrix}.\label{eq:RHSPDE}
\end{align}
Equating (\ref{eq:PolarLaplacian}) and (\ref{eq:RHSPDE}) gives the
first equation as
\begin{align*}
 & \cos\theta A_{\lambda}^{\prime\prime}(r)-\sin\theta B_{\lambda}^{\prime\prime}(r)+\frac{1}{r}\left(\cos\theta A_{\lambda}^{\prime}(r)-\sin\theta B_{\lambda}^{\prime}(r)\right)\\
 & +\frac{1}{r^{2}}\left(-\cos\theta A_{\lambda}(r)+\sin\theta B_{\lambda}(r)\right)\\
= & -\lambda^{2}\cos\theta A_{\lambda}(r)+\lambda^{2}\sin\theta B_{\lambda}(r)-2\lambda\cos\theta C_{\lambda}^{\prime}(r).
\end{align*}
As this holds for all $\theta$, we may equate the coefficients of $\sin \theta$ and $\cos \theta$ to obtain 
\begin{align}
A_{\lambda}^{\prime\prime}(r)+\frac{A_{\lambda}^{\prime}(r)}{r}-\frac{1}{r^{2}}A_{\lambda}(r) & =-\lambda^{2}A_{\lambda}(r)-2\lambda C_{\lambda}^{\prime}(r)\nonumber \\
-B_{\lambda}^{\prime\prime}(r)-\frac{1}{r}B_{\lambda}^{\prime}(r)+\frac{1}{r^{2}}B_{\lambda}(r) & =\lambda^{2}B_{\lambda}(r)\label{eq:ABCsystem1}
\end{align}
and from the second equation, 
\begin{align*}
 & \sin\theta A_{\lambda}^{\prime\prime}(r)+\cos\theta B_{\lambda}^{\prime\prime}(r)+\frac{1}{r}\left(\sin\theta A_{\lambda}^{\prime}(r)+\cos\theta B_{\lambda}^{\prime}(r)\right)\\
 & +\frac{1}{r^{2}}\left(-\sin\theta A_{\lambda}(r)-\cos\theta B_{\lambda}(r)\right)\\
= & -\lambda^{2}\sin\theta A_{\lambda}(r)-\lambda^{2}\cos\theta B_{\lambda}(r)-2\lambda\sin\theta C_{\lambda}^{\prime}(r)
\end{align*}
and therefore, 
\begin{align}
A_{\lambda}^{\prime\prime}(r)+\frac{1}{r}A_{\lambda}^{\prime}(r)-\frac{1}{r^{2}}A_{\lambda}(r) & =-\lambda^{2}A_{\lambda}(r)-2\lambda C_{\lambda}^{\prime}(r)\nonumber \\
B_{\lambda}^{\prime\prime}(r)+\frac{B_{\lambda}^{\prime}(r)}{r}-\frac{B_{\lambda}(r)}{r^{2}} & =-\lambda^{2}B_{\lambda}(r).\label{eq:ABCsystem2}
\end{align}
Combining (\ref{eq:ABCsystem1}) and (\ref{eq:ABCsystem2}) and multiplying
the equations throughout by $r^{2}$, we have 
\begin{equation}
\begin{split}r^{2}A_{\lambda}^{\prime\prime}(r)+rA_{\lambda}^{\prime}(r)-A_{\lambda}(r)+\lambda^{2}r^{2}A_{\lambda}(r)+2\lambda r^{2}C_{\lambda}^{\prime}(r) & =0\\
r^{2}B_{\lambda}^{\prime\prime}(r)+B_{\lambda}^{\prime}(r)r-B_{\lambda}(r)+r^{2}\lambda^{2}B_{\lambda}(r) & =0\\
C_{\lambda}^{\prime}(r)+rC_{\lambda}^{\prime\prime}(r)+2\lambda^{2}rC_{\lambda}(r)+2\lambda rA_{\lambda}^{\prime}(r)+2\lambda A_{\lambda}(r) & =0
\end{split}
\label{eq:EquationsForABC}
\end{equation}
for all $r>0$. By continuity of the second derivatives of $A_{\lambda},B_{\lambda},C_{\lambda}$
(see Lemma \ref{cor:A and C}), the
equations hold for all $r\geq0$. Again using the continuity of the
second derivatives of $A_{\lambda},B_{\lambda},C_{\lambda}$, we may substitute $r=0$ into (\ref{eq:EquationsForABC})
to get 
\begin{equation*}
A_{\lambda}(0) = B_{\lambda}(0) = C_{\lambda}^{\prime}(0) =0.
\end{equation*}
Using the boundary conditions for $z\in\partial\mathbb{D}$ in Lemma \ref{lem:PDEForDevelopment},
we have 
\[
  \begin{pmatrix}
    A_{\lambda}(1) \\ B_{\lambda}(1) \\ C_{\lambda}(1)
  \end{pmatrix}
  =
  F_\lambda(r, 0)
  = 
  \begin{pmatrix}
    0 \\ 0 \\ 1
  \end{pmatrix}.
  \qedhere
\]
\end{proof}

\section{Solving the ODE for $A_{\lambda},B_{\lambda},C_{\lambda}$}

\label{sec:solvingABC}

\begin{lemma} \label{lem:SolvingForABC}
  Let
  \begin{equation} \label{eq:zeta etc.}
    \zeta = \sqrt{\frac{- 1 + i \sqrt{7}}{2}}, \quad
    \alpha = \frac{1}{2} \zeta^3 + \zeta, \quad
    d (\lambda) = \operatorname{Im} \bigl(\bar{\alpha} J_0 (\lambda \zeta) J_1 (\lambda \bar{\zeta})\bigr)
  \end{equation}
  where $J_0, J_1$ are the Bessel functions of the first kind. Fix $\lambda >
  0$ such that $d (\lambda) J_1 (\lambda) \neq 0$. Then the real-valued
  functions defined for all $r > 0$ by%
  \footnote{Note that the two determinations of the square root in the definition of~$\zeta$ yield the yield the same $A_{\lambda}$, $B_{\lambda}$ and~$C_\lambda$.}
  \begin{equation} \label{eq:ABCsol}
    A_{\lambda} (r) = \frac{2 \sqrt{2}}{d (\lambda)} \operatorname{Im} (J_1 (\lambda \bar{\zeta}) J_1 (\lambda \zeta r)), \quad
    B_{\lambda} (r) = 0, \quad
    C_{\lambda} (r) = \frac{1}{d (\lambda)} \operatorname{Im} (\bar{\alpha} J_1 (\lambda \bar{\zeta}) J_0 (\lambda \zeta r))
  \end{equation}
  are the unique solution of the differential
  system~\eqref{eq:EquationsForABCRepeat} satisfying the boundary conditions
  stated in Lemma~\ref{lem:EquationForABC}.
\end{lemma}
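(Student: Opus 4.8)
The plan is to solve the linear ODE system \eqref{eq:EquationsForABCRepeat} directly and verify that the claimed formulas \eqref{eq:ABCsol} are the unique solution meeting the boundary conditions of Lemma~\ref{lem:EquationForABC}. First I would dispatch $B_\lambda$: its equation $r^2 B_\lambda'' + r B_\lambda' - B_\lambda + \lambda^2 r^2 B_\lambda = 0$ is exactly Bessel's equation of order one in the variable $\lambda r$, so the general solution is a combination of $J_1(\lambda r)$ and $Y_1(\lambda r)$; the condition $B_\lambda(0)=0$ rules out $Y_1$ (which blows up at $0$), and then $B_\lambda(1)=0$ forces the coefficient of $J_1(\lambda r)$ to vanish, because $J_1(\lambda)\neq 0$ by hypothesis. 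Hence $B_\lambda \equiv 0$.

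Next I would treat the coupled pair for $A_\lambda$ and $C_\lambda$. Differentiating the third equation and using the first to eliminate $C_\lambda$ (or, more cleanly, introducing $u = A_\lambda$ and reducing to a constant-coefficient-in-$\log r$ or Euler-type structure) yields a single fourth-order Euler--Bessel equation. The key algebraic observation is that the associated indicial/characteristic polynomial factors so that the relevant exponents are governed by the quantity $\zeta^2 = \frac{-1+i\sqrt7}{2}$ appearing in \eqref{eq:zeta etc.}: concretely, after the substitution the system decouples into two Bessel equations of orders $0$ and $1$ in the complex-rescaled variable $\lambda\zeta r$ (and its conjugate $\lambda\bar\zeta r$). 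Requiring regularity at $r=0$ selects $J_0(\lambda\zeta r), J_1(\lambda\zeta r)$ and their conjugates over the $Y$-solutions; taking real parts (equivalently, imaginary parts, as the normalization in \eqref{eq:ABCsol} does) produces a two-parameter family of real solutions. The two remaining boundary conditions $A_\lambda(1)=0$ and $C_\lambda(1)=1$ then pin down the two constants, and solving the resulting $2\times2$ linear system introduces the determinant $d(\lambda) = \operatorname{Im}(\bar\alpha J_0(\lambda\zeta)J_1(\lambda\bar\zeta))$ in the denominator, with $\alpha = \tfrac12\zeta^3+\zeta$ arising from the relation between $C_\lambda'$ and $A_\lambda$ at $r=1$. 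The hypothesis $d(\lambda)\neq 0$ is exactly the nondegeneracy condition making this system solvable. That the two choices of sign for $\zeta$ give the same real functions is clear since replacing $\zeta$ by $-\zeta$ in \eqref{eq:ABCsol} changes $J_1(\lambda\zeta r) \mapsto -J_1(\lambda\zeta r)$, $J_1(\lambda\bar\zeta)\mapsto -J_1(\lambda\bar\zeta)$, $\alpha \mapsto -\alpha$, $d(\lambda)\mapsto -d(\lambda)$, leaving each ratio invariant.

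Finally, uniqueness: the system \eqref{eq:EquationsForABCRepeat} is a linear ODE system on $(0,1]$ with a regular singular point at $r=0$; the six scalar boundary conditions ($A_\lambda(0)=B_\lambda(0)=C_\lambda'(0)=0$ and $A_\lambda(1)=B_\lambda(1)=0$, $C_\lambda(1)=1$) are precisely enough to select a unique solution within the space of solutions that are twice continuously differentiable up to $r=0$ (Corollary~\ref{cor:A and C} guarantees this regularity for the actual $F_\lambda$). Indeed, the regularity at $0$ kills exactly the singular Frobenius solutions, leaving a solution space of the right dimension on which the endpoint conditions act invertibly whenever $d(\lambda)J_1(\lambda)\neq 0$. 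Since the explicit functions in \eqref{eq:ABCsol} are built from $J_0,J_1$ (entire, hence smooth at $0$) and satisfy all six conditions by the computation above, they must coincide with the unique solution.

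I expect the main obstacle to be the bookkeeping in the second paragraph: carrying out the elimination to see that the fourth-order Euler--Bessel equation factors through the specific irrational-looking constant $\zeta$, and then correctly tracking the complex combinations so that the real solutions assemble with the stated normalizations and the determinant $d(\lambda)$ comes out as written. The Bessel identifications themselves are routine once the change of variables is set up, and the regularity/uniqueness argument is standard Frobenius theory; it is the precise matching of constants (the factor $2\sqrt2$, the combination $\alpha = \tfrac12\zeta^3+\zeta$) that requires care.
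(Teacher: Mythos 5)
Your plan follows the paper's proof essentially step for step: the same dispatch of $B_\lambda$ via Bessel's equation of order one and $J_1(\lambda)\neq 0$, the same reduction of the coupled $A_\lambda,C_\lambda$ system to Bessel equations of orders $0$ and $1$ in the scaled variables $\lambda\zeta r$, $\lambda\bar\zeta r$ with $\zeta^4+\zeta^2+2=0$ and $\alpha=\tfrac12\zeta^3+\zeta$ (the paper gets there by a direct cylinder-function ansatz rather than elimination to a fourth-order equation, a cosmetic difference), the same use of the $Y_\nu$ singularity at $0$ to cut the four-dimensional solution space down, and the same $2\times 2$ system at $r=1$ with determinant proportional to $d(\lambda)$. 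The one slip is in your footnote bookkeeping: under $\zeta\mapsto-\zeta$ the sign flips of $\bar\alpha$ and $J_1(\lambda\bar\zeta)$ cancel, so $d(\lambda)\mapsto d(\lambda)$ rather than $-d(\lambda)$ (with your stated signs the ratio defining $A_\lambda$ would actually change sign); the invariance conclusion is nevertheless correct.
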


\begin{proof}
  Recall that, for $\nu = 0, 1$, Bessel's differential equation
  \begin{equation}
    x^2 y'' (x) + xy' (x) + (x^2 - \nu^2) y (x) = 0, \label{eq:Bessel}
  \end{equation}
  has a canonical basis of solutions consisting of the ordinary Bessel
  functions $J_{\nu} (x)$~and $Y_{\nu} (x)$ {\cite[§10.2]{DLMF}}. The
  function~$J_{\nu}$ is entire, while $Y_{\nu}$ is analytic on
  $\mathbb{C}\setminus\mathbb{R}_-$, and $Y_{\nu}(x)$ diverges to $-\infty$ as
  $x \rightarrow 0$ along the positive reals.

  Let $\mathcal{C}_{\nu}$ denote a cylinder function, i.e., a linear
  combination $aJ_{\nu} + bY_{\nu}$ with coefficients $a, b$ that do not
  depend on~$\nu$ {\cite[§10.2(ii)]{DLMF}}. One has~{\cite[(10.6.2),
  (10.6.3)]{DLMF}}
  \begin{equation} \label{eq:Bessel derivatives}
    \mathcal{C}_0^{'}(x) = - \mathcal{C}_1'(x), \qquad
    x \, \mathcal{C}_1'(x) + \mathcal{C}_1(x) = x \, \mathcal{C}_0(x).
  \end{equation}
  The equation for~$B_{\lambda}$ is exactly Bessel's equation with $\nu = 1$ and $x =
  \lambda r$; its general solution for $r > 0$ is hence $B_{\lambda} (r) =\mathcal{C}_1
  (\lambda r)$. Since $Y_1 (x)$ diverges as $x \rightarrow 0$ and $\lambda$~is
  nonzero, the initial condition at~$0$ forces $b = 0$. Similary, the initial
  condition at~$1$ implies $a = 0$ and therefore $B_{\lambda} = 0$, unless~$J_1
  (\lambda) = 0$, in which case any $B_{\lambda} (r) = aJ_1 (\lambda r)$ is a solution.

  Let us turn to the coupled equations for $A_{\lambda}$~and~$C_{\lambda}$. Make the ansatz
  \begin{equation} \label{eq:Bessel ansatz}
    C_{\lambda}(r) = f_0(r), \quad A_{\lambda}(r) = \alpha f_1(r), \qquad f_{\nu}(r)
    =\mathcal{C}_{\nu}(\lambda \zeta r),
  \end{equation}
  where $\alpha, \zeta$ are yet unspecified complex numbers. The equation
  involving~$C_{\lambda}''$ becomes
  \begin{equation} \label{eq:deq C transformed}
    rf_0''(r) + f_0'(r) + 2 \lambda^2 rf_0(r) + 2 \lambda \alpha(rf_1'(r)
    + f_1(r)) = 0.
  \end{equation}
  The change of variable passing from~$\mathcal{C}_0$ to~$f_0$ transforms
  Bessel's equation into
  \[ rf_0''(r) + f_0'(r) + \lambda^2 \zeta^2 rf_0(r) = 0, \]
  and the relations~{\eqref{eq:Bessel derivatives}} yield $rf_1' (r) + f_1 (r)
  = \lambda \zeta rf_0(r)$, so {\eqref{eq:deq C transformed}}~holds when
  \[ \lambda^2 \zeta^2 r = 2 \lambda^2 r + 2 \lambda \alpha \cdot \lambda
     \zeta r, \]
  i.e., when $\zeta^2 = 2 (1 + \alpha \zeta)$.

  Similarly, the equation involving~$A_{\lambda}''$ rewrites as
  \begin{equation} \label{eq:deq A transformed}
    r^2 \alpha f_1'' (r) + r \alpha f_1' (r) + (\lambda^2 r^2 - 1) \alpha
    f_1(r) + 2 \lambda r^2 f'_0 (r) = 0,
    \end{equation}
  and the last term on the left-hand side is equal to $2 \lambda^2 \zeta r^2
  f_1 (r)$ by~{\eqref{eq:Bessel derivatives}}. Thus, {\eqref{eq:deq A
  transformed}}~reduces to Bessel's equation provided that $\zeta^2 = 1 - 2
  \alpha^{- 1} \zeta$.

  In summary, the functions~{\eqref{eq:Bessel ansatz}} define a solution
  of~\eqref{eq:EquationsForABCRepeat} for any choice of $a, b$ in the
  definition of~$\mathcal{C}_{\nu}$ and $\alpha$, $\zeta$ such that
  $\zeta^2 = 2 (1 + \alpha \zeta) = 1 - 2 \alpha^{- 1} \zeta$.
  The latter condition is equivalent to
  \[ \zeta^4 + \zeta^2 + 2 = 0, \qquad \alpha = \zeta^3 / 2 + \zeta . \]
  Letting $\zeta$ now denote a fixed root of $\zeta^4 + \zeta^2 + 2$, say the
  one in~{\eqref{eq:zeta etc.}}, the choices
  \begin{equation} \label{eq:Bessel sols}
    C_{\lambda} (r) = J_0 (\lambda \zeta r), J_0 (\lambda \bar{\zeta} r), Y_0 (\lambda
    \zeta r), Y_0 (\lambda \bar{\zeta} r)
  \end{equation}
  provide us with four linearly independent\footnote{This follows, for
  instance, from the expressions {\cite[(10.2.2), (10.8.1)]{DLMF}} and the
  fact that $(\lambda \zeta)^2 \neq 1$.} solutions, which hence form a basis
  of the solution space of the system of two linear differential equation of
  order two.

  The asymptotic behaviour of~$Y_1$ at the origin,
  $Y_1 (\lambda \zeta r) \sim - 2 (\pi \lambda \zeta r)^{- 1}$~{\cite[(10.7.4)]{DLMF}},
  shows that linear combinations involving any of the last two
  solutions~{\eqref{eq:Bessel sols}} are incompatible with the
  conditions $A_{\lambda} (0) = C_{\lambda}' (0) = 0$. Therefore, one has
  \begin{align*}
    C_{\lambda} (r) &= uJ_0 (\lambda \zeta r) + vJ_0 (\lambda \bar{\zeta} r), \\
    A_{\lambda} (r) &= u \alpha J_1 (\lambda \zeta r) + v \bar{\alpha} J_1 (\lambda \bar{\zeta} r)
  \end{align*}
  for some $u, v \in \mathbb{C}$. The conditions $A_{\lambda} (1) = 0$, $C_{\lambda} (1) = 1$
  translate into a linear system for~$u, v$ of determinant
  \[ \bar{\alpha} J_0 (\lambda \zeta) J_1 (\lambda \bar{\zeta}) - \alpha J_1
     (\lambda \zeta) J_0 (\lambda \bar{\zeta}) = 2 i d (\lambda) \]
  (where we have used the fact that $J_{\nu} (\bar{z}) = \overline{J_{\nu}
  (z)}$ {\cite[(10.11.9)]{DLMF}}). When $d (\lambda) \neq 0$, the unique
  solution is $u = - \bar{v} = \bar{\alpha} J_1 (\lambda \bar{\zeta})$. Since
  $| \alpha |^2 = 2 \sqrt{2}$, this leads to the expressions~\eqref{eq:ABCsol}.
\end{proof}

\section{Concluding}
\label{sec:concluding}

\begin{lemma}
  \label{lem:ExistenceOfZero}
  In the notation of Lemma~\ref{lem:SolvingForABC}, there exists
  $\tilde\lambda > 0$ such that $C(0)$, viewed as a function of~$\lambda$,
  has a pole at~$\tilde\lambda$.
\end{lemma}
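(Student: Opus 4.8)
The plan is to start from formula~\eqref{eq:ABCsol}: since $J_0(0)=1$, the quantity in question is
\[
  C(0)=C_{\lambda}(0)=\frac{N(\lambda)}{d(\lambda)},\qquad N(\lambda):=\operatorname{Im}\!\bigl(\bar{\alpha}\,J_1(\lambda\bar\zeta)\bigr).
\]
Using $\operatorname{Im}(w)=\tfrac{1}{2i}(w-\bar w)$ together with $\overline{J_\nu(\lambda\zeta)}=J_\nu(\lambda\bar\zeta)$ for real~$\lambda$, one rewrites $N(\lambda)=\tfrac{1}{2i}\bigl(\bar\alpha J_1(\lambda\bar\zeta)-\alpha J_1(\lambda\zeta)\bigr)$ and $d(\lambda)=\tfrac{1}{2i}\bigl(\bar\alpha J_0(\lambda\zeta)J_1(\lambda\bar\zeta)-\alpha J_0(\lambda\bar\zeta)J_1(\lambda\zeta)\bigr)$, which are manifestly \emph{entire} in~$\lambda$. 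Hence $C(0)=N/d$ is meromorphic and its only possible poles are the zeros of~$d$; it is therefore enough to produce $\tilde\lambda>0$ with $d(\tilde\lambda)=0$ and $N(\tilde\lambda)\neq0$, since then $\operatorname{ord}_{\tilde\lambda}(d)\ge1>0=\operatorname{ord}_{\tilde\lambda}(N)$.

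First I would show that $d$ takes both signs on $(0,\infty)$. As $\lambda\to0^{+}$ one has $J_0(\lambda\zeta)\to1$ and $J_1(\lambda\bar\zeta)=\tfrac12\lambda\bar\zeta+O(\lambda^{3})$, so $d(\lambda)=\tfrac{\lambda}{2}\operatorname{Im}(\bar\alpha\bar\zeta)+O(\lambda^{3})$; since $\alpha\zeta=\tfrac12\zeta^{2}-1$ (equivalent to $\alpha=\tfrac12\zeta^{3}+\zeta$ via $\zeta^{4}+\zeta^{2}+2=0$), one gets $\operatorname{Im}(\bar\alpha\bar\zeta)=-\tfrac12\operatorname{Im}(\zeta^{2})=-\tfrac{\sqrt7}{4}$, so $d(\lambda)<0$ for small $\lambda>0$ (and likewise $N(\lambda)\sim-\tfrac{\sqrt7}{8}\lambda$, consistent with $C(0)\to1$). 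For large~$\lambda$ I would feed the Hankel expansion $J_\nu(x)=\sqrt{2/(\pi x)}\cos(x-\tfrac{\nu\pi}{2}-\tfrac{\pi}{4})+O(|x|^{-3/2})$ (legitimate for $x=\lambda\zeta,\lambda\bar\zeta$ since $\arg\zeta\in(0,\tfrac\pi2)$) into $d$, keep in each cosine only the exponentially dominant term (recall $\operatorname{Im}(\lambda\zeta)>0>\operatorname{Im}(\lambda\bar\zeta)$), and obtain $\bar\alpha J_0(\lambda\zeta)J_1(\lambda\bar\zeta)=\tfrac{-i\bar\alpha}{2\pi\lambda|\zeta|}e^{2\operatorname{Im}(\zeta)\lambda}(1+o(1))$, hence $d(\lambda)=\tfrac{-\operatorname{Re}\alpha}{2\pi\lambda|\zeta|}e^{2\operatorname{Im}(\zeta)\lambda}(1+o(1))$; as $\operatorname{Re}\alpha=\operatorname{Re}\bigl(\tfrac\zeta2-\tfrac1\zeta\bigr)=\operatorname{Re}(\zeta)\bigl(\tfrac12-|\zeta|^{-2}\bigr)=\operatorname{Re}(\zeta)\bigl(\tfrac12-\tfrac1{\sqrt2}\bigr)<0$ (because $\operatorname{Re}\zeta>0$), one gets $d(\lambda)>0$ for $\lambda$ large, so $d$ has a zero in $(0,\infty)$ by the intermediate value theorem. (A cleaner route to the same conclusion is the Sturm--Liouville identity $d(\lambda)=\tfrac{\sqrt7\,\lambda}{4}\bigl(\sqrt2\int_0^1 r|J_1(\lambda\zeta r)|^2\,dr-\int_0^1 r|J_0(\lambda\zeta r)|^2\,dr\bigr)$, obtained by integrating over $r\in[0,1]$ the Wronskian relation between the Bessel-type solutions $J_\nu(\lambda\zeta r)$ and $J_\nu(\lambda\bar\zeta r)$: its bracket is $\sim\tfrac{\lambda^{2}}{8}-\tfrac12<0$ near~$0$ and $\sim(\sqrt2-1)\int_0^1 r|J_0(\lambda\zeta r)|^2\,dr>0$ for large~$\lambda$, since $|J_0(\lambda\zeta)|\sim|J_1(\lambda\zeta)|$.) For the rigorous statement it is simplest to evaluate $d$ at two concrete points via truncated Bessel series with explicit tail bounds: one finds $d(2)<0$ and $d(3)>0$, so there is a zero $\tilde\lambda\in(2,3)$.

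The remaining, and hardest, point is $N(\tilde\lambda)\neq0$. A common zero of $N$ and $d$ is rigid: if $N(\lambda_0)=0$ with $\lambda_0>0$, then $\bar\alpha J_1(\lambda_0\bar\zeta)$ is real and nonzero ($J_1$ has no non-real zeros and $\lambda_0\bar\zeta\notin\mathbb{R}$), so $d(\lambda_0)=\operatorname{Im}\!\bigl(J_0(\lambda_0\zeta)\bigr)\cdot\bar\alpha J_1(\lambda_0\bar\zeta)$, and $d(\lambda_0)=0$ forces $J_0(\lambda_0\zeta)\in\mathbb{R}$; thus it is enough to know $\operatorname{Im}J_0(\tilde\lambda\zeta)\neq0$. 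I would deduce this from the stronger claim that $N(\lambda)<0$ for all $\lambda\in(0,3]$, which contains the zero $\tilde\lambda$. Writing $N(\lambda)=\sum_{k\ge0}\tfrac{(-1)^{k}}{k!(k+1)!\,2^{2k+1}}\operatorname{Im}(\bar\alpha\bar\zeta^{2k+1})\,\lambda^{2k+1}$ and using $|\alpha|=|\zeta|=2^{1/4}$, the coefficients for $k=0,1,2$ are all negative (they equal $-\tfrac{\sqrt7}{8}$, $\approx-0.124$, $\approx-0.0017$), while the modulus of the tail $\sum_{k\ge3}$ admits a geometric-type majorant that stays below $\tfrac{\sqrt7}{8}\lambda$ on $(0,3]$; hence $N(\lambda)<0$ there. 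In particular $N(\tilde\lambda)<0\neq0$, so $C(0)=N/d$ has a genuine pole at~$\tilde\lambda$, which proves the lemma.

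I expect the third step to be the real obstacle: it is the only place where the soft asymptotics of the second step no longer suffice and one must commit to effective estimates — bounding truncated Bessel series (to pin $d$ down at $\lambda=2$ and $\lambda=3$, or to certify the sign of $N$ on $(0,3]$) and the Hankel remainder — in order to guarantee that the zero of $d$ furnished earlier is not neutralized by a zero of $N$. Everything else is either the linear algebra already carried out in Lemma~\ref{lem:SolvingForABC} or textbook Bessel asymptotics.
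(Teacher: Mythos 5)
Your proposal is correct and follows essentially the same route as the paper: write $C_\lambda(0)$ as a ratio $N(\lambda)/d(\lambda)$ of entire functions, locate a sign change of $d$ on a concrete interval via truncated Bessel series with explicit tail bounds, and certify that the numerator $\operatorname{Im}(\bar\alpha J_1(\lambda\bar\zeta))$ stays strictly negative there (the paper uses the interval $(2.5,3)$ and the same three leading coefficients $c_0,c_1,c_2$ with negative imaginary parts plus a tail estimate). Your additional soft arguments — the small-$\lambda$/Hankel asymptotics for the sign change of $d$ and the rigidity observation about common zeros of $N$ and $d$ — are correct but, as you note, ultimately give way to the same effective numerical verification the paper performs.
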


\begin{proof}
  Let us first show that $d(\lambda)$ has a zero lying in the
  interval~$(2.5, 3)$.
  Consider the series expansions~{\cite[(10.2.2)]{DLMF}}
  \begin{equation}
    \label{eq:Bessel series} J_0 (x) = \sum_{k = 0}^{\infty} (- 1)^k  \frac{(x
    / 2)^{2 k}}{k!^2}, \qquad J_1 (x) = \sum_{k = 0}^{\infty} (- 1)^k
    \frac{(x / 2)^{2 k + 1}}{k! (k + 1) !} .
  \end{equation}
  For $x \in \mathbb{C}$ and $n \in \mathbb{N}$ such that $| x | < 2 (n + 1)$,
  the remainders starting at index~$n$ of both series are bounded by
  \begin{equation}
    \sum_{k = n}^{\infty} \frac{| x / 2 |^{2 k}}{k!^2} = \frac{| x / 2 |^{2
    n}}{n!^2}  \sum_{k = n}^{\infty} \frac{| x / 2 |^{2 (k-n)}}{((n + 1)
    \cdots (n + k))^2} \leqslant \frac{1}{1 - | x |^2 / (2 n + 2)^2}  \frac{| x /
    2 |^{2 n}}{n!^2} . \label{eq:tail bound}
  \end{equation}
  In particular, for $x = \lambda \zeta$ or $x = \lambda \bar{\zeta}$ with $0
  < \lambda \leqslant 3$, we have $| x / 2 | < 1.784$. For $n = 5$, the
  quantity~\eqref{eq:tail bound} is bounded by~$0.025$. By replacing
  $J_0$~and~$J_1$ by the first five terms of the series~\eqref{eq:Bessel series}
  in the expression of~$d(\lambda)$ and propagating this bound by the triangle
  inequality, one can check that $d (2.5) < - 0.06$. A similar calculation
  shows that $d (3) > 0.03$. Since $d (\lambda)$ is a continuous function
  of~$\lambda$, it follows that $d (\tilde{\lambda})$ vanishes for
  some~$\tilde{\lambda} \in (2.5, 3)$.

  We still need to check that the numerator of $C_{\lambda} (0)$ in~\eqref{eq:ABCsol} does
  not vanish at~$\tilde{\lambda}$. One has $J_0 (0) = 1$. Taking $n = 3$
  in~\eqref{eq:tail bound} yields an expression of the form
  \[ \operatorname{Im} (\bar{\alpha} J_1 (\lambda \bar\zeta)) = \operatorname{Im} (c_0 \lambda +
     c_1 \lambda^3 + c_2 \lambda^5 + \bar{\alpha} b),
     \qquad | b | \leqslant 1.12 \]
  where one can check that $\operatorname{Im} (c_0) < - 0.33$, $\operatorname{Im} (c_1) < -
  0.12$, $\operatorname{Im} (c_2) < -0.001$. For all
  $\lambda \geqslant 2.5$, this implies
  \[ \operatorname{Im} (\bar{\alpha} J_1 (\lambda \zeta)) \leqslant \operatorname{Im} (c_0)
     \lambda + \operatorname{Im} (c_1) \lambda^3 + | \alpha |  | b | \leqslant - 1.3.
  \]
  The claim follows.
\end{proof}

\begin{remark}
  Instead of doing the calculation sketched in the proof manually, one can
  easily prove the result using a computer implementation of Bessel functions
  that provides rigorous error bounds. For example, using the interval
  arithmetic library Arb~{\cite{Johansson2017}} via SageMath, the check that
  $d (\lambda)$~has a zero goes as follows. The quantities of the form
  {\texttt{[x.xxx +/- eps]}} appearing in the output are guaranteed to be
  rigorous enclosures of the corresponding real quantities. We check the
  presence of a zero in the interval $[2.82, 2.83]$ instead of $[2.5, 3.0]$
  because having a tighter estimate simplifies the second step.
  \begin{verbatim}
  sage: zeta = CBF(sqrt((-1+I*sqrt(7))/2))
  sage: alpha = zeta^3/2 + zeta
  sage: lb, ub = CBF(282/100), CBF(283/100)
  sage: (alpha.conjugate()*(lb*zeta).bessel_J(0)
  ....:                   *(lb*zeta.conjugate()).bessel_J(1))
  [-13.208370024264 +/- 4.16e-13] + [-0.003639973760 +/- 4.63e-13]*I
  sage: (alpha.conjugate()*(ub*zeta).bessel_J(0)
  ....:                   *(ub*zeta.conjugate()).bessel_J(1))
  [-13.424373315124 +/- 4.75e-13] + [0.005782411521 +/- 4.38e-13]*I
  \end{verbatim}
  One can then verify as follows that the image by the function $\lambda \mapsto
  \bar{\alpha} J_1 (\lambda \bar{\zeta})$ of the interval $[2.82, 2.83]$
  only contains elements of negative imaginary part.
  \begin{verbatim}
  sage: crit = lb.union(ub); crit # convex hull (real interval)
  [2.8 +/- 0.0301]
  sage: alpha.conjugate()*(crit*zeta.conjugate()).bessel_J(1)
  [+/- 0.0707] + [-4e+0 +/- 0.303]*I
  \end{verbatim}
\end{remark}

\begin{theorem}
  The series expansion with respect to~$\lambda$ of $\Phi(0)$
  has a finite radius of convergence.
\end{theorem}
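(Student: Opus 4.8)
The plan is to reduce the claim about the radius of convergence of $\Phi(0) = \expSig(X^0)_\infty$ to the existence of a pole established in Lemma~\ref{lem:ExistenceOfZero}. Recall that the radius of convergence of the series $\sum_n \lambda^n \proj_n(\Phi(0))$ is infinite precisely when $\Phi(0)$ lies in the space~$E$, equivalently when $(\lambda M)\Phi(0)$ stays finite for every $M \in L(\R^2,\mathcal A)$ and every $\lambda > 0$. So it suffices to exhibit one matrix~$M$ and one finite~$\tilde\lambda$ such that $(\lambda M)\Phi(0)$ fails to converge as $\lambda$ increases to~$\tilde\lambda$. I take $M$ to be the hyperbolic development map~\eqref{eq:HyperbolicDevelopment} and consider the third coordinate of $(\lambda M)\Phi(0)$ applied to $(0,0,1)^t$, which by~\eqref{eq:def-F} and~\eqref{eq:DefinitionABC} is exactly $C_\lambda(0)$.

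The core of the argument is already in place. For $\lambda < \lambda^*$, Lemma~\ref{lem:SolvingForABC} gives the closed form $C_\lambda(0) = d(\lambda)^{-1}\operatorname{Im}(\bar\alpha J_1(\lambda\bar\zeta)J_0(\lambda\zeta\cdot 0)) = d(\lambda)^{-1}\operatorname{Im}(\bar\alpha J_1(\lambda\bar\zeta))$, using $J_0(0)=1$. By Lemma~\ref{lem:ExistenceOfZero}, there is $\tilde\lambda \in (2.5,3)$ with $d(\tilde\lambda) = 0$ while the numerator $\operatorname{Im}(\bar\alpha J_1(\lambda\bar\zeta))$ stays bounded away from zero near $\tilde\lambda$ (indeed $\le -1.3$ for $\lambda \ge 2.5$). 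Hence the analytic function $\lambda \mapsto C_\lambda(0)$, which coincides on $(0,\lambda^*)$ with the power series $\sum_n \lambda^n \bigl(M\proj_n(\Phi(0))(0,0,1)^t\bigr)_3$, has a genuine pole at $\tilde\lambda$. Since a convergent power series with nonnegative-growth-controlled coefficients cannot be analytically continued past its radius of convergence in a way that introduces a pole on the positive axis at a point smaller than that radius—more precisely, if the radius of convergence of $\sum_n c_n \lambda^n$ were $\ge \tilde\lambda' > \tilde\lambda$ then the sum would define a function holomorphic on the disc of radius $\tilde\lambda'$, contradicting the pole at $\tilde\lambda$—we conclude that the radius of convergence of the scalar series is at most $\tilde\lambda < \infty$. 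A fortiori the radius of convergence of $\Phi(0)$ in~$\TC$ is at most $\tilde\lambda$.

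The one point that needs care, and which I regard as the main obstacle, is bridging the gap between "$\lambda < \lambda^*$'' (where all the PDE/ODE analysis and the formula~\eqref{eq:ABCsol} are valid) and the location $\tilde\lambda$ of the pole, which lies in $(2.5,3)$ and need not be less than~$\lambda^*$. The resolution is that the power series $\sum_n \lambda^n \bigl(M\proj_n(\Phi(0))(0,0,1)^t\bigr)_3$ has \emph{some} radius of convergence $\rho > 0$ (this is exactly the content of the Lyons--Ni bound $\|\proj_n(\Phi(z))\| \le C^n$ recalled in the introduction, together with $\|M\proj_n\| \le c^n$ from Lemma~\ref{lem:DifferentiabilityOfExpectedSignature}), so on $(0,\rho)$ it defines an analytic function; by Lemma~\ref{lem:SolvingForABC} and uniqueness of analytic continuation this function agrees with the meromorphic expression $d(\lambda)^{-1}\operatorname{Im}(\bar\alpha J_1(\lambda\bar\zeta))$ wherever the latter is analytic. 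If $\rho$ were $> \tilde\lambda$, the analytic function on $(0,\rho)$ would have to be finite at $\tilde\lambda$, contradicting the pole. Hence $\rho \le \tilde\lambda < \infty$, which is the assertion. Assembling these observations — identify the scalar series, invoke its positive radius of convergence, continue analytically to the Bessel expression, and read off the pole from Lemma~\ref{lem:ExistenceOfZero} — completes the proof.
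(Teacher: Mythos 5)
Your proposal is correct and follows essentially the same route as the paper: identify $C_\lambda(0)$ as the scalar power series obtained from the hyperbolic development, match it with the explicit Bessel formula of Lemma~\ref{lem:SolvingForABC} for small~$\lambda$, extend by the identity theorem, and derive a contradiction from the pole at~$\tilde\lambda$ furnished by Lemma~\ref{lem:ExistenceOfZero}. The only cosmetic difference is that the paper argues by assuming $F_\lambda(0)$ is entire and contradicting that, whereas you bound the radius of convergence of the scalar series directly; the substance is identical.
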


\begin{remark}
  Since the condition of \cite{ChevyrevLyons} of uniqueness of laws is only \emph{sufficient},
  the questions remains on whether there exists another law on $G$
  having the same moments as $S(X^0)_{0,T}$.
\end{remark}
\begin{proof}
Assume for contradiction that $\Phi(0)$ has an infinite
radius of convergence. Then $F_\lambda(0)$ is an entire function
in $\lambda$. We also know from Corollary \ref{cor:A and C}
that there exists $\lambda^{*}>0$ such that for real $\lambda<\lambda^{*}$
\begin{equation}
F_\lambda(0)=\begin{pmatrix}
\cos\theta & -\sin\theta & 0\\
\sin\theta & \cos\theta & 0\\
0 & 0 & 1
\end{pmatrix}\begin{pmatrix}
A_{\lambda}(0)\\
B_{\lambda}(0)\\
C_{\lambda}(0)
\end{pmatrix},\label{eq:PolarDecompositionRecalled}
\end{equation}
where $A_{\lambda},B_{\lambda},C_{\lambda}$ are defined by Lemma \ref{lem:SolvingForABC}.
By the Identity theorem, $A_{\lambda},B_{\lambda},C_{\lambda}$ are entire functions and
\eqref{eq:PolarDecompositionRecalled} holds
for all~$\lambda$.
This contradicts Lemma~\ref{lem:ExistenceOfZero}, and therefore $\Phi(0)$
has a finite radius of convergence.
\end{proof}

\section{Appendix}
\label{sec:appendix}
Let $\Gamma$ be a domain in $\mathbb{R}^{d}$.
\begin{definition}
Let $u$ be a locally integrable function in $\Gamma$ and $\alpha$
be a multi-index. Then a locally integrable function $r_{\alpha}u$
such that for every $g\in C_{c}^{\infty}(\Gamma)$, 
\begin{eqnarray*}
\int_{\Gamma}g(x)r_{\alpha}(x)dx=(-1)^{\vert\alpha\vert}\int_{\Gamma}D^{\alpha}g(x)u(x)dx,
\end{eqnarray*}
will be called \emph{weak derivative} of $u$ and $r_{\alpha}$ is denoted
by $D^{\alpha}u$. By convention, $D^{\alpha}u=u$ if $\vert\alpha\vert=0$. 
\end{definition}

\begin{definition}
  The \emph{Sobolev space} $W^{k,p}(\Gamma)$ for $p, k \in \mathbb N$ is defined to be the set of all $\mathbb{R}^{\tilde{d}}$-valued
functions $u\in L^{p}(\Gamma)$ such that for every multi-index $\alpha$
with $\vert\alpha\vert\leq k$, the weak partial derivative $D^{\alpha}u$
belongs to $L^{p}(\Gamma)$, i.e. 
\begin{eqnarray*}
W^{k,p}(\Gamma)=\left\{ u\in L^{p}(\Gamma):D^{\alpha}u\in L^{p}(\Gamma)\,\,\forall|\alpha|\leq k\right\} .
\end{eqnarray*}
It is endowed with the Sobolev norm defined as follows: 
\begin{eqnarray*}
\vert\vert u\vert\vert_{W^{k,p}(\Gamma)}=\sum_{j=1}^{\tilde{d}}\left(\sum_{\vert\alpha\vert\leq k}\int_{\Gamma}\vert D^{\alpha}u^{j}(x)\vert^{p}dx\right)^{1/p}.
\end{eqnarray*}
When $k=0$, this norm coincides with the $L^{p}(\Gamma)$-norm,
i.e. 
\begin{eqnarray*}
\vert\vert u\vert\vert_{W^{k,p}(\Gamma)}=\vert\vert u\vert\vert_{L^{p}(\Gamma)}.
\end{eqnarray*}
\end{definition}

\begin{theorem}
\label{boundaryRegularity} Let $M$ be a second order differential
operator with coefficients $\{a^{i,j}\}$. Let $u$ be a weak solution
of 
\begin{eqnarray*}
Mu=f(x),\\
u-g\in H_{0}^{1,2}(\Gamma).
\end{eqnarray*}
Suppose that the ellipticity condition holds. Let $f\in W^{k,2}(\Gamma)$,
$g\in W^{k+2,2}(\Gamma)$. Let $\Gamma$ be a bounded domain of class $C^{k+2}$ and
let the coefficients of $M$ be of class $C^{k+1}(\bar{\Gamma})$.
Then 
\begin{eqnarray*}
\vert\vert u\vert\vert_{W^{k+2,2}(\Gamma)}\leq c\left(\vert\vert f\vert\vert_{W^{k,2}(\Gamma)}+\vert\vert g\vert\vert_{W^{k+2,2}(\Gamma)}\right),
\end{eqnarray*}
with $c$ depending on $\lambda,d$, $\Gamma$ and on the $C^{k+1}$-norms
for the $a^{i,j}$.
\end{theorem}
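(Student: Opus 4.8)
This is the classical $L^2$ (``Sobolev'') global elliptic regularity estimate, essentially Theorem~8.13 in Gilbarg and Trudinger's monograph, and the plan is to reproduce the standard proof by Nirenberg's method of difference quotients (for the divergence-form operator $Mv = \partial_i(a^{ij}\partial_j v)$ implicit in the notion of weak solution used here), keeping track of the constant dependence. First I would homogenize the boundary data: set $v = u - g \in H_0^{1,2}(\Gamma)$, so that $Mv = \tilde f := f - Mg$. Since $g \in W^{k+2,2}(\Gamma)$ and the $a^{ij}$ are of class $C^{k+1}(\bar\Gamma)$, one has $\tilde f \in W^{k,2}(\Gamma)$ with $\lVert \tilde f\rVert_{W^{k,2}} \le c(\lVert f\rVert_{W^{k,2}} + \lVert g\rVert_{W^{k+2,2}})$, so it suffices to bound $\lVert v\rVert_{W^{k+2,2}(\Gamma)}$ by $c\,\lVert \tilde f\rVert_{W^{k,2}(\Gamma)}$. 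The lower-order norm of~$v$ on the right is absorbed by testing the weak form against~$v$ and using the ellipticity constant together with the Poincar\'e inequality on the bounded domain~$\Gamma$, which is one source of the dependence on~$\lambda$ and~$\Gamma$.

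Next I would establish the interior estimate. On a subdomain $\Gamma' \Subset \Gamma$ with a cutoff $\eta \in C_c^\infty(\Gamma)$, $\eta \equiv 1$ on~$\Gamma'$, test the weak formulation of $Mv = \tilde f$ against $-\delta_l^{-h}(\eta^2 \delta_l^h v)$, where $\delta_l^h$ is the difference quotient of step~$h$ in the direction~$e_l$; moving one difference quotient onto the coefficient factor, invoking the Leibniz rule for~$\delta_l^h$, Cauchy--Schwarz and Young's inequality, and absorbing into the coercive term yields a bound on $\lVert \delta_l^h \nabla v\rVert_{L^2(\Gamma')}$ uniform in~$h$; hence $v \in W^{2,2}_{\mathrm{loc}}$ with $\lVert v\rVert_{W^{2,2}(\Gamma')} \le c(\lVert \tilde f\rVert_{L^2} + \lVert v\rVert_{L^2})$. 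For the statement with general~$k$ one bootstraps: differentiating $Mv = \tilde f$ in a direction~$x_l$ shows that $\partial_l v$ solves an elliptic equation of the same form with right-hand side in $W^{k-1,2}_{\mathrm{loc}}$ (this is where $a^{ij} \in C^{k+1}$ is needed), and induction on~$k$ gives $v \in W^{k+2,2}_{\mathrm{loc}}$.

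The boundary step is the heart of the matter. Near a point $x_0 \in \partial\Gamma$, a $C^{k+2}$ diffeomorphism straightens $\partial\Gamma$ onto a piece of $\{x_d = 0\}$, turning the equation into a uniformly elliptic one on a half-ball~$B^+$ with $C^{k+1}$ coefficients (products of the $a^{ij}$ with first derivatives of the flattening map and of its inverse) and with $v = 0$ on the flat face. For tangential directions $l < d$ the difference quotient $\delta_l^h$ maps~$B^+$ into itself, so, using a cutoff supported away from the curved part of~$\partial B^+$, the interior argument applies verbatim and gives every second derivative except~$\partial_{dd}v$ in $L^2$ up to the flat boundary; the remaining second derivative is then recovered algebraically from the equation upon dividing by $a^{dd} \ge \lambda > 0$. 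Iterating --- differentiating the equation only in tangential directions, which preserves the homogeneous boundary condition --- bootstraps tangential regularity and recovers the missing normal derivatives stage by stage, again using $C^{k+1}$ coefficients and $a^{dd} \ge \lambda$.

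Finally I would cover the compact set~$\bar\Gamma$ by finitely many balls, treat the interior ones by the second paragraph and the boundary ones by the flattening argument, sum the local estimates against a subordinate partition of unity, and collect constants; their dependence comes out to be on~$\lambda$, $d$, the $C^{k+1}$-norms of the~$a^{ij}$, and --- through the number and $C^{k+2}$-regularity of the charts --- on~$\Gamma$, as stated. I expect the boundary analysis to be the main obstacle: ensuring the tangential difference quotients never leave~$B^+$ where the cutoff lives, correctly tracking how the change of variables acts on the coefficients and the right-hand side, and arranging the induction so that recovering normal derivatives from the equation at each stage never asks for more regularity than is already available.
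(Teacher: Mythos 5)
Your proposal is correct: it is precisely the standard difference-quotient proof of global $W^{k+2,2}$ elliptic regularity for divergence-form operators with Dirichlet data, which is the content of Theorem~8.13 in Gilbarg--Trudinger. The paper itself does not reprove this; its ``proof'' is a one-line citation of that theorem (with boundary datum $\varphi=0$ after the reduction you also perform), so you have essentially reconstructed the proof of the cited result rather than taken a different route. One point where your write-up actually adds something the bare citation does not: the theorem as stated in Gilbarg--Trudinger carries an extra term $\lVert u\rVert_{L^2(\Gamma)}$ on the right-hand side, and the version stated here omits it; your step of testing the weak formulation against $v=u-g$ and absorbing the lower-order norm via coercivity and the Poincar\'e inequality is exactly what is needed to justify dropping that term (and is legitimate here because the operator in the application is the Laplacian, so there are no lower-order coefficients to spoil coercivity). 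The rest --- homogenizing the boundary data, interior difference quotients, boundary flattening with tangential difference quotients, recovering $\partial_{dd}v$ from the equation using $a^{dd}\ge\lambda$, induction on $k$, and a partition of unity --- is the classical argument and tracks the stated constant dependence correctly.
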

\begin{proof}
It is proved by using Theorem 8.13 in \cite{gilbarg2015elliptic} and setting the boundary condition $\varphi = 0$.
\end{proof}
In the following we prove Lemma \ref{lemma_pde} for $m\geq2$, which
is a generalization of Lemma 3.11 for the case $m=\lfloor\frac{d}{2}\rfloor$
in \cite{LyonsNi}. 
\begin{lemma}
\label{lemma_pde} Let $\Gamma$ be a bounded domain of class $C^{m}$
in $\mathbb{R}^{d}$, where $m\geq2$. Then there exists a constant
$C$ only depending on $\Gamma$ and $d$, such that for every positive
integer $n\geq2$, 
\begin{eqnarray}
\vert\vert\proj_{n}(\Phi)\vert\vert_{W^{m,2}(\Gamma)}\leq C\left(\vert\vert\proj_{n-1}(\Phi)\vert\vert_{W^{m,2}(\Gamma)}+\vert\vert\proj_{n-2}(\Phi)\vert\vert_{W^{m,2}(\Gamma)}\right).\label{inequality1}
\end{eqnarray}
\end{lemma}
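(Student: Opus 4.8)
The plan is to recognize $\proj_n(\Phi)$ as the solution of an inhomogeneous Laplace equation with zero Dirichlet data whose right-hand side is built from the two preceding levels, and then to feed this into the global $L^2$ elliptic regularity estimate of Theorem~\ref{boundaryRegularity}. Concretely, the PDE~\eqref{eq:LyonsNiPDE} (which holds verbatim with $\mathbb{D}$ replaced by the general domain $\Gamma$) says that $u := \proj_n(\Phi)$ is a weak solution of $\Delta u = f_n$ in $\Gamma$, where
\[
  f_n := -2\sum_{i=1}^{d} e_i\otimes\frac{\partial \proj_{n-1}(\Phi)}{\partial z_i} - \Bigl(\sum_{i=1}^{d} e_i\otimes e_i\Bigr)\otimes\proj_{n-2}(\Phi),
\]
together with the boundary condition $u = 0$ on $\partial\Gamma$ for $n\geq 1$. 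Reading this componentwise, $u$ is a weak solution of $Mu = f_n$ with $u - g\in H_0^{1,2}(\Gamma)$ for $M = \Delta$ and $g\equiv 0$; the operator $M$ is uniformly elliptic with constant (hence $C^{m-1}(\bar\Gamma)$) coefficients, and $\Gamma$ is of class $C^m$ by hypothesis.

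Applying Theorem~\ref{boundaryRegularity} with $k = m-2\ge 0$ then produces a constant $c = c(\Gamma,d)$ — crucially independent of $n$, since the principal part is always $\Delta$ and the boundary datum is always $0$ — such that
\[
  \|\proj_n(\Phi)\|_{W^{m,2}(\Gamma)} \le c\,\|f_n\|_{W^{m-2,2}(\Gamma)}.
\]
It then remains to estimate $\|f_n\|_{W^{m-2,2}(\Gamma)}$. By the triangle inequality for the projective tensor norm, together with $\|e_i\|=1$ and $\|\sum_i e_i\otimes e_i\|\le d$, one gets a pointwise bound of $|f_n|$ by a constant (depending only on $d$) times $\bigl(\sum_i|\partial_{z_i}\proj_{n-1}(\Phi)| + |\proj_{n-2}(\Phi)|\bigr)$; taking $W^{m-2,2}(\Gamma)$ norms and absorbing the extra derivative in the first term yields
\[
  \|f_n\|_{W^{m-2,2}(\Gamma)} \le C_1(d)\,\bigl(\|\proj_{n-1}(\Phi)\|_{W^{m-1,2}(\Gamma)} + \|\proj_{n-2}(\Phi)\|_{W^{m-2,2}(\Gamma)}\bigr).
\]
Since the Sobolev norms are monotone in the order, $\|\cdot\|_{W^{m-2,2}(\Gamma)}\le\|\cdot\|_{W^{m-1,2}(\Gamma)}\le\|\cdot\|_{W^{m,2}(\Gamma)}$, and combining the last two displays gives~\eqref{inequality1} with $C = c(\Gamma,d)\,C_1(d)$.

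The step I expect to require the most care is the first one: verifying that $\proj_n(\Phi)$ genuinely qualifies as a weak solution lying in $H_0^{1,2}(\Gamma)$, i.e. that it vanishes on $\partial\Gamma$ in the trace sense (for $n\geq1$) and has enough a priori Sobolev regularity for $f_n$ to belong to $W^{m-2,2}(\Gamma)$, so that Theorem~\ref{boundaryRegularity} is applicable. This is most cleanly organised as an induction on $n$: the base cases $\proj_0(\Phi)=1$ and $\proj_1(\Phi)=0$ are smooth and hence lie in $W^{m,2}(\Gamma)$, and if $\proj_{n-1}(\Phi),\proj_{n-2}(\Phi)\in W^{m,2}(\Gamma)$ then in particular $\proj_{n-1}(\Phi)\in W^{m-1,2}(\Gamma)$, so $f_n\in W^{m-2,2}(\Gamma)$ and the argument above both yields~\eqref{inequality1} and shows $\proj_n(\Phi)\in W^{m,2}(\Gamma)$, closing the induction; the boundary vanishing and the weak-solution property at each level come from \cite{LyonsNi}, and applying the scalar elliptic estimate to each of the finitely many tensor components is harmless. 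All remaining hypotheses — uniform ellipticity of $\Delta$, the $C^m$ regularity of $\partial\Gamma$, and the $n$-independence of the constant — are immediate.
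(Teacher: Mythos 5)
Your proposal is correct and follows essentially the same route as the paper: the paper's proof simply invokes Lemma~3.11 of \cite{LyonsNi} (whose content is exactly the elliptic estimate of Theorem~\ref{boundaryRegularity} applied to the Poisson equation~\eqref{eq:LyonsNiPDE} with zero Dirichlet data) together with Theorem~\ref{Phi_theorem} for the a priori regularity $\proj_n(\Phi)\in W^{m,2}$, which is the same regularity check you organise as an induction. The only difference is that you spell out the right-hand-side estimate and the $n$-independence of the constant explicitly, which the paper delegates to the citation.
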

\begin{proof}
The proof of Lemma 3.11 in \cite{LyonsNi} can be applied here directly,
except for that we need to check that $\proj_{n}(\Phi)\in W^{m,2}$,
which is proved in the following theorem \ref{Phi_theorem}. 
\end{proof}
\begin{theorem}
\label{Phi_theorem} Suppose that $\Gamma$ is a non-empty bounded
domain in $E$. It follows $\Phi$ is infinitely differentiable in
componentwise sense, i.e. for all index $I$, $\proj_{n}\circ\Phi$
is infinitely differentiable for all $n$.
\end{theorem}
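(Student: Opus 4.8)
The plan is to argue by induction on the tensor level~$n$, proving that every scalar component of $\proj_n(\Phi)$ is infinitely differentiable on~$\bar\Gamma$. Since the Laplacian in~\eqref{eq:LyonsNiPDE} acts componentwise, each entry (indexed by~$I$) solves a scalar Poisson equation, and I would bootstrap its regularity up the Sobolev scale by iterating the boundary elliptic estimate of Theorem~\ref{boundaryRegularity}. It is essential that this argument avoid Lemma~\ref{lemma_pde}, which in turn depends on the present theorem; accordingly I would use only qualitative elliptic regularity together with the fact, established in~\cite{LyonsNi}, that $\proj_n(\Phi)$ satisfies~\eqref{eq:LyonsNiPDE}.

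For the base cases, $\proj_0(\Phi)\equiv 1$ and $\proj_1(\Phi)\equiv 0$ are constant, hence trivially in $C^\infty(\bar\Gamma)$. For the inductive step I would assume $\proj_{n-1}(\Phi)$ and $\proj_{n-2}(\Phi)$ are infinitely differentiable on~$\bar\Gamma$, so that the right-hand side
\[
f_n(z) := -2\sum_{i=1}^d e_i\otimes\frac{\partial\proj_{n-1}(\Phi(z))}{\partial z_i} - \left(\sum_{i=1}^d e_i\otimes e_i\right)\otimes\proj_{n-2}(\Phi(z))
\]
of~\eqref{eq:LyonsNiPDE}, being a linear combination of $\proj_{n-2}(\Phi)$ and of first derivatives of $\proj_{n-1}(\Phi)$, lies in $C^\infty(\bar\Gamma)$ and in particular in $W^{k,2}(\Gamma)$ for every~$k$. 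Writing $u:=\proj_n(\Phi)$, the function $u$ solves $\Delta u = f_n$ with boundary value $u=\delta_{n,0}$ on $\partial\Gamma$. I would first record the a priori regularity of~$u$ coming from its probabilistic definition: $u$ is bounded (indeed $\Vert\proj_n(\Phi)\Vert\le C^n$) and continuous on~$\bar\Gamma$, attaining the stated boundary values because the exit time tends to~$0$ as the starting point approaches $\partial\Gamma$, so the stopped signature degenerates to the trivial one. Hence $u\in L^2(\Gamma)$, and by~\eqref{eq:LyonsNiPDE} it is a distributional solution of the Poisson equation.

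To feed this into Theorem~\ref{boundaryRegularity} I would compare $u$ with the unique $H_0^1(\Gamma)$ weak solution $w_n$ of $\Delta w = f_n$, $w=\delta_{n,0}$ on $\partial\Gamma$, furnished by the Lax--Milgram theorem. The difference $u-w_n$ is distributionally harmonic, hence smooth and harmonic by Weyl's lemma, is continuous up to $\partial\Gamma$, and vanishes there, so the maximum principle forces $u=w_n$. This identifies $u$ as a genuine weak solution with $u-g\in H_0^1(\Gamma)$ for $g=\delta_{n,0}$ and smooth data on a smooth domain; Theorem~\ref{boundaryRegularity} then yields $u\in W^{k+2,2}(\Gamma)$ for each~$k$, provided $\partial\Gamma$ is of class $C^{k+2}$. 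Taking $\Gamma$ smooth (as is the disc~$\mathbb{D}$) and letting $k\to\infty$, the Sobolev embedding theorem gives $u\in C^\infty(\bar\Gamma)$, closing the induction.

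The main obstacle will be the foundational step rather than the bootstrap: before elliptic regularity can be switched on, one must justify that $\proj_n(\Phi)$ is a bona fide weak solution with the correct boundary trace. Concretely, I would need to verify from the probabilistic representation that $z\mapsto\proj_n(\Phi(z))$ is continuous up to $\partial\Gamma$ with boundary value $\delta_{n,0}$ and that~\eqref{eq:LyonsNiPDE} holds distributionally; the comparison with $w_n$ via Weyl's lemma and the maximum principle then removes any gap between ``distributional $L^2$ solution'' and ``$H_0^1$ weak solution''. Once these qualitative facts are secured, the ascent up the Sobolev scale is entirely routine, and the only further care needed is to ensure the boundary is smooth enough ($C^\infty$) for the boundary estimate to be iterated to all orders.
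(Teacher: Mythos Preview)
Your argument is essentially correct but follows a genuinely different route from the paper. The paper does not bootstrap through the PDE hierarchy at all: it quotes a self-reproducing convolution identity from \cite{LyonsNi}, namely $\Phi(z)=(G_\varepsilon*\Phi)(z)$, where $G_\varepsilon$ is a smooth compactly supported tensor-valued kernel built from the expected signature of Brownian motion run until it exits a small ball. Since $\Phi\in L^1(\Gamma)$ and $D^\alpha G_\varepsilon\in L^1$ for every multi-index~$\alpha$, smoothness of $\proj_n(\Phi)$ follows in one stroke by pushing derivatives onto the kernel. This argument needs no regularity of~$\partial\Gamma$, no induction on~$n$, and no appeal to~\eqref{eq:LyonsNiPDE}. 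Your inductive elliptic bootstrap, by contrast, is the standard PDE route; it buys you regularity up to the boundary (which the convolution argument does not), but at the cost of requiring $\partial\Gamma\in C^\infty$, whereas the theorem as stated assumes only a bounded domain. For the disc this is harmless, but to match the stated generality you would need to replace Theorem~\ref{boundaryRegularity} by interior elliptic regularity.

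One further caution on circularity: your parenthetical justification ``indeed $\Vert\proj_n(\Phi)\Vert\le C^n$'' for $u\in L^2$ is exactly the bound that, in this paper, is obtained from Lemma~\ref{lemma_pde} and hence depends on the theorem you are proving. You only need $u$ to be bounded, and that is available directly from moment estimates on the stopped Stratonovich signature (via exponential moments of the exit time); you should invoke that probabilistic fact rather than the $C^n$ bound.
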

\begin{proof}
Based on Theorem 3.2 in \cite{LyonsNi}, it shows that 
\[
\Phi(z)=\int_{\Gamma}G_{\varepsilon}(z-y)\otimes\Phi_{\Gamma}(y)dy=G_{\varepsilon}*\Phi(z),
\]
where $K_{\varepsilon}(r)$ is a smooth distribution with compact
support $[0,\frac{\varepsilon}{2}]$, $*$ is the convolution, and
$G_{\varepsilon}$ be a map from $\mathbb{R}^{d}$ to $T((\mathbb{R}^{d}))$
defined by: 
\begin{eqnarray*}
 &  & \Psi(z)=\Psi(z)=\mathbb{E}^{0}\left[S(B_{\left[0,\tau_{\mathbb{D}(0,|z|)}\right]})|B_{\tau_{\mathbb{D}(0,|z|)}}=z\right].\\
 &  & G_{\varepsilon}(z)=\Psi(-z)K_{\varepsilon}(|z|).
\end{eqnarray*}
Since $\Psi$ is smooth (in polynomial form) and $K_{\varepsilon}$
is a smooth function with compact support, $G_{\varepsilon}$ is a
smooth function with compact support. It is easy to show that for
any partial derivative $D^{\alpha}G_{\varepsilon}$ is $L_{1}$ integrable.
\[
||D^{\alpha}G_{\varepsilon}||_{L^{1}}<+\infty.
\]
On the other hand, $\Phi\in L^{1}$ as well, and so we have 
\[
||G_{\varepsilon}*D^{\alpha}\Phi||_{L^{1}}<+\infty.
\]
Thus $G_{\varepsilon}*\Phi$ is infinitely differentiable, since $D^{\alpha}\Phi=(D^{\alpha}G_{\varepsilon})*\Phi\in L^{1}$.
\end{proof}

\printbibliography

\end{document}